\newcounter{minutes}\setcounter{minutes}{\time}
\newcounter{hours}\setcounter{hours}{\time}
\date{}
\newfont{\cyrilic}{wncyr10 scaled 1000}
\title[{New means generated by generalized trigonometric functions}]
{On certain new means generated by generalized trigonometric functions}
\author[J. S\'andor]{J\'ozsef S\'andor}
\address{Babe\c{s}-Bolyai University,
Department of Mathematics,
Str. Kogalniceanu nr. 1
400084 Cluj-Napoca, Romania}
\email{jsandor@math.ubbcluj.ro}
\author[B.A Bhayo]{Barkat Ali Bhayo}
\address{Department of Mathematics, Sukkur IBA University, Airport road Sukkur, Sindh, Pakistan\newline
Faculty of Natural Sciences, Sabanci University, 34956 Tuzla/Istanbul, Turkey}
\email{barkat.bhayo@iba-suk.edu.pk}
\newcommand{\comment}[1]{}
\theoremstyle{plain}
\newtheorem{theorem}[equation]{Theorem}
\newtheorem{lemma}[equation]{Lemma}
\newtheorem{corollary}[equation]{Corollary}
\newtheorem{remark}[equation]{Remark}
\numberwithin{equation}{section}
\begin{document}
\font\fFt=eusm10 
\font\fFp=eusm5  
\def\K{\mathchoice
{\hbox{\,\fFt K}}
{\hbox{\,\fFt K}}
{\hbox{\,\fFa K}}
{\hbox{\,\fFp K}}}
\def\T{\mathchoice
{\hbox{\,\fFt T}}
{\hbox{\,\fFt T}}
{\hbox{\,\fFa T}}
{\hbox{\,\fFp T}}}

\def\thefootnote{}
\footnotetext{ \texttt{\tiny File:~\jobname .tex,
          printed: \number\year-\number\month-\number\day,
          \thehours.\ifnum\theminutes<10{0}\fi\theminutes}
} \makeatletter\def\thefootnote{\@arabic\c@footnote}\makeatother

\allowdisplaybreaks

\begin{abstract}
In this paper, authors generalize logarithmic mean $L$, Neuman-S\'andor $M$, two Seiffert means $P$ and $T$ 
as an application of generalized trigonometric and hyperbolic functions. Moreover, several two-sided inequalities 
involving these generalized means are established.
\end{abstract}

\maketitle

\bigskip
{\bf 2010 Mathematics Subject Classification}: 34L10, 33E05, 33C75.

{\bf Keywords}: Generalized trigonometric functions, hypergeometric functions, $p$-Laplacian, logarithmic mean, two Seiffert means.


\section{Introduction}
For the definition of new means involved in our formulation we introduce some special functions
and notation. The \emph{Gaussian hypergeometric function} is defined by
$$F\left(a,b;c;z\right)={}_2F_1\left(a,b;c;z\right)=\sum^\infty_{n=0}\frac{(a,n)(b,n)}
{(c,n)}\frac{z^n}{n!},\quad |z|<1,$$
where $(a,n)$ denotes the shifted factorial function
$$(a,n)=a(a+1)(a+2)\ldots (a+n-1),\quad n=1,2,3,\ldots,$$
and $(a,0)=1$ for $a\neq 1$. For the applications of this function in 
various fields of the mathematical
and natural sciences, reader is referred to \cite{askey}.

Special functions, such as classical \emph{gamma function} $\Gamma$, the \emph{digamma function} $\psi$ and
the \emph{beta function} $B(.\,,.)$ have close relation with hypergeometric function. For $x, y > 0$, these functions are
defined by
$$\Gamma(x)=\int^\infty_0 e^{-t}t^{x-1}\,dt,\quad   \psi(x)=\frac{\Gamma^{'}(x)}{\Gamma(x)},
\quad B(x,y)=\frac{\Gamma(x)\Gamma(y)}{\Gamma(x+y)},$$
respectively.
The hypergeometric function can be represented in the integral form as follows
\begin{equation}\label{integral}
F(a,b;c;z)=\frac{\Gamma(c)}{\Gamma(b)(c-b)}\int_0^1{t^{b-1}(1-t)^{c-b-1}(1-zt)^{-a}dt}.
\end{equation}

The eigenfunction $\sin_p$ of the so-called one-dimensional $p$-Laplacian problem
\cite{dm}
$$-\Delta_p u=-\left(|u'|^{p-2}u'\right)'
=\lambda|u|^{p-2}u,\,u(0)=u(1)=0,\ \ \ p>1,$$
is the inverse function of $F_p:[0,1]\to \left[0,\frac{\pi_p}{2}\right]$, defined as
$$F_p(x)={\rm arcsin}_p(x)=\int^x_0(1-t^p)^{-\frac{1}{p}}dt,$$
where
$$\pi_p=2{\rm arcsin}_p(1)=\frac{2}{p}\int^1_0(1-s)^{-\frac{1}{p}}s^{\frac{1}{p}-1}ds=\frac{2}
{p}\,B\left(1-\frac{1}{p},\frac{1}{p}\right)=\frac{2 \pi}{p\,\sin\left(\frac{\pi}{p}\right)}\,.$$
The function ${\rm arcsin}_p$ is called the generalized inverse sine function, and its inverse function $\sin_p:[0,\pi_p/2]\to[0,1]$ is called generalized sine function. For $x\in[\pi_p/2,\pi_p]$, one can extends the function $\sin_p$ to $[0,\pi_p]$ by defining
$\sin_p(x)=\sin(\pi_p-x)$,
and further extension cab be achieved on $\mathbb{R}$ by oddness and $2\pi$-periodicity. 
The range of $p$ is restricted to 
$(1, \infty)$ because only in this case $\sin_p(x)$ can be made periodic like usual sine function.

Similarly, the other generalized inverse trigonometric and hyperbolic functions
${\rm arccos}_p:(-1,1)\to \left(-a_p,a_p\right),\,{\rm arctan}_p:(-\infty,\infty)\to (-a_p,a_p),\,{\rm arcsinh}_p:(-\infty,\infty)\to(-\infty,\infty),\,
{\rm arctanh}_p:(-1,1)\to (-\infty,\infty)$
are defined as follows
\begin{equation}\label{pintegrals}
\begin{aligned}
{\rm arccos}_p(x)&=&\int^{(1-x^p)^{\frac{1}{p}}}_0(1-|t|^p)^{-\frac{1}{p}}dt,\quad 
{\rm arctan}_p(x)=\int^x_0(1+|t|^p)^{-1}dt,\\
{\rm arcsinh}_p(x)&=&\int^x_0(1+|t|^p)^{-\frac{1}{p}}dt,\quad
{\rm arctanh}_p(x)=\int^x_0(1-|t|^p)^{-1}dt,
\end{aligned}
\end{equation}
where $a_p=\pi_p/2$.
Above inverse generalized trigonometric and hyperbolic functions coincide with usual trigonometric and hyperbolic functions for $p=2$.

\section{\bf Generalization of means and main result}

For two positive real numbers $a$ and $b$, we define arithmetic mean $A$, geometric mean $G$, logarithmic mean $L$, 
two Seiffert means $P$ and $T$, and Neuman-S\'andor mean $M$ introduced in \cite{ns1} as follows,
$$A=A(a,b)=\frac{a+b}{2},\quad G=G(a,b)=\sqrt{ab},$$
$$L=L(a,b)=\frac{a-b}{\log(a)-\log(b)},\quad a\neq b,$$
$$P=P(a,b)=\frac{a-b}{2{\rm arcsin}\left(\frac{a-b}{a+b}\right)},$$
$$T=T(a,b)=\frac{a-b}{2{\rm arctan}\left(\frac{a-b}{a+b}\right)},$$
$$M=M(a,b)=\frac{a-b}{2{\rm arcsinh}\left(\frac{a-b}{a+b}\right)}.$$

The arithmetic-geometric mean $AG(a, b)$ of two real numbers $a$ and $b$ is defined as follows: Let
us consider the sequences $\{a_n\}$ and $\{b_n\}$ satisfying
$$a_{n+1}=\frac{a_n+b_n}{2},\quad b_{n+1}=\sqrt{a_nb_n},\quad n=0,1,2,\ldots$$
with $a_0=a$ and $b_0=b$.

In \cite{bhatia}, Bhatia and Li generalized the logarithmic mean $L$ and arithmetic-geometric mean $AG(a, b)$ 
by introducing an interpolating family of means $\mathbb{M}_p(a, b)$, defined by
$$\frac{1}{\mathbb{M}_p(a, b)}=n_p\int_0^\infty{\frac{dt}{((t^p+a^p)(t^p+b^p))^{1/p}}},\quad p\in(0,\infty),$$
where $n_p=\int_0^\infty{\frac{dt}{(1+t^p)^{2/p}}}.$ Moreover, 
$$\mathbb{M}_0(a, b)=\lim_{p\to 0}\mathbb{M}_{p}(a, b)=\sqrt{ab},$$
$$\mathbb{M}_1(a, b)=L(a,b) \quad {\rm and}\quad \mathbb{M}_2(a, b)=AG(a,b).$$

In \cite{nueman1, nueman2}, Neuman generalized the logarithmic mean $L$, two Seiffert means $P$ and $T$,
and the Neuman-S\'andor mean $M$ by introducing the the $p$-version of the Schwab-Borchardt mean $SB_p$ as follows
$$L_p=L_p(a,b)= SB_p(A_{p/2},G)=\frac{A_{p/2}v_p}{{\rm arctanh}_p(v_p)},$$
$$P_p=P_p(a,b)= SB_p(G,A_{p/2})=\frac{A_{p/2}v_p}{{\rm arcsin}_p(v_p)},$$
$$T_p=T_p(a,b)= SB_p(A_{p/2},A_p)=\frac{A_{p/2}v_p}{{\rm arctan}_p(v_p)},$$
$$M_p=M_p(a,b)= SB_p(A_p,A_{p/2})=\frac{A_{p/2}v_p}{{\rm arcsinh}_p(v_p)},$$
where
$$SB_p(a,b)=b\, F\left(\frac{1}{p},\frac{1}{p};1+\frac{1}{p},1-\left(\frac{a}{b}\right)^p\right)^{-1},$$
$$v_p=\frac{|x^{p/2}-y^{p/2}|}{x^{p/2}+y^{p/2}},$$
and $A_p=A_p(a,b)$ is a power mean of order $p$.

Motivated by the work of Neuman \cite{nueman1, nueman2}, Bhatia and Li \cite{bhatia}, here we give a natural and new generalization 
of $L,\, P,\,T$ and $M$ by utilizing the generalized trigonometric and generalized hyperbolic functions as follows.  

\bigskip

\noindent{\bf Generalization of means.}
For  $p\geq 2$ and $a>b>0$, we define
\begin{equation}\label{pmeans}
  \begin{aligned}
\tilde{P}_p=\tilde{P}_p(a,b)&=&\frac{a-b}{2{\rm arcsin}_p\left(\frac{a-b}{a+b}\right)}=\frac{x}{{\rm arcsin}_p(x)}A,\\
\tilde{T}_p=\tilde{T}_p(a,b)&=&\frac{a-b}{2{\rm arctan}_p\left(\frac{a-b}{a+b}\right)}=\frac{x}{{\rm arctan}_p(x)}A,\\
\tilde{L}_p=\tilde{L}_p(a,b)&=&\frac{a-b}{2{\rm artanh}_p\left(\frac{a-b}{a+b}\right)}=\frac{x}{{\rm artanh}_p(x)}A,\\
\tilde{M}_p=\tilde{M}_p(a,b)&=&\frac{a-b}{2{\rm arsinh}_p\left(\frac{a-b}{a+b}\right)}=\frac{x}{{\rm arsinh}_p(x)}A,\\
\end{aligned}
\end{equation}
where $x=(a-b)/(a+b)$. 
By utilizing \cite[Lemma 1]{bbp}, the above functions can be expressed in terms of hypergeometric functions as follows,
\begin{eqnarray*}
\tilde{P}_p&=&\frac{A}{F\left(\frac{1}{p},\frac{1}{p};1+\frac{1}{p};x^p\right)},\\
\tilde{T}_p&=&\frac{A \cdot (1+x^p)^{1/p}}{F
\left(\frac{1}{p},\frac{1}{p};1+\frac{1}{p};\frac{x^p}{1+x^p}\right)},\\
\tilde{L}_p&=&\frac{A}{F\left(1,\frac{1}{p};1+\frac{1}{p};x^p\right)},\\
\tilde{M}_p&=&\frac{A \cdot (1+x^p)^{1/p}}
{F\left(1,\frac{1}{p};1+\frac{1}{p};\frac{x^p}{1+x^p}\right)}.
\end{eqnarray*}
where $x=(a-b)/(a+b)$ and $a>b>0$.
Now we are in the position to state our main result. Our main result reads as follows.
\begin{theorem}\label{thm1-2704} 
For $p\geq 2$ and $a>b>0$, the functions
$\tilde{P}_p,\, \tilde{T}_p,\, \tilde{L}_p$ and $\tilde{M}_p$
define a mean of two variables $a$ and $b$.
\end{theorem}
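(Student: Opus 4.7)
\medskip
\noindent\textbf{Proof plan.}
To say that $\tilde{P}_p,\tilde{T}_p,\tilde{L}_p,\tilde{M}_p$ are means of $a,b$ amounts to verifying, for $a>b>0$, the two-sided bound $b\le \tilde X_p(a,b)\le a$ (the symmetry $a\leftrightarrow b$ is built into the fact that $x=(a-b)/(a+b)$ appears only through absolute value in the definitions). Since each $\tilde X_p$ has the common shape $(x/f_p(x))\,A$ with $A=(a+b)/2$ and $f_p\in\{\mathrm{arcsin}_p,\mathrm{arctan}_p,\mathrm{artanh}_p,\mathrm{arsinh}_p\}$, and since $b/A=1-x$, $a/A=1+x$, the entire theorem reduces to the following single pair of inequalities on $(0,1)$ for $p\ge 2$:
\[
\frac{x}{1+x}\;\le\; f_p(x)\;\le\;\frac{x}{1-x}.
\]

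\medskip
My plan is to establish these four pairs of inequalities directly from the integral representations \eqref{pintegrals}. The key auxiliary fact I will prove first is the elementary chain, valid for $p\ge 1$ and $0\le t<1$:
\[
1-t\;\le\;1-t^{p}\;\le\;1,\qquad (1-t)^{p}\;\le\;1-t^{p},\qquad 1+t^{p}\;\le\;1+t\;\le\;(1+t)^{p}.
\]
The first is immediate from $t^{p}\le t$; the second (equivalently $(1-t^p)^{-1/p}\le(1-t)^{-1}$) follows from $h(t)=1-t^p-(1-t)^p$ having $h(0)=h(1)=0$ and $h'$ changing sign exactly once; the third is similar. These yield the pointwise bounds
\[
1\le(1-t^p)^{-1/p}\le\frac{1}{1-t},\qquad \frac{1}{1+t}\le(1+t^p)^{-1/p}\le 1,
\]
\[
1\le\frac{1}{1-t^p}\le\frac{1}{1-t},\qquad \frac{1}{1+t}\le\frac{1}{1+t^p}\le 1,
\]
valid on $[0,x]$.

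\medskip
With these pointwise integrand comparisons in hand, integrating from $0$ to $x$ sandwiches each of $\mathrm{arcsin}_p(x),\mathrm{artanh}_p(x)$ between $x$ and $-\log(1-x)$, and sandwiches each of $\mathrm{arctan}_p(x),\mathrm{arsinh}_p(x)$ between $\log(1+x)$ and $x$. The theorem is then finished by the two classical logarithmic inequalities
\[
\log(1+x)\ge\frac{x}{1+x},\qquad -\log(1-x)\le\frac{x}{1-x}\qquad(0<x<1),
\]
each of which is a one-line calculus exercise (both differences vanish at $x=0$ and have positive derivative on $(0,1)$), together with the trivial estimate $x\ge x/(1+x)$ and $x\le x/(1-x)$ which handles the ``easy'' direction in each case.

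\medskip
The main obstacle is really only the middle comparison $(1-t)^{p}\le 1-t^{p}$: without it, the integrand for $\mathrm{arcsin}_p$ would not be bounded above by $1/(1-t)$ and the bound would have to be built through a different, more delicate estimate. Everything else (the log inequalities and the two-sided integration) is standard. Because the argument is uniform in $p\ge 2$ and in fact works for every $p\ge 1$, I expect no additional case analysis to be needed.
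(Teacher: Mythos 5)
Your proposal is correct, and it takes a genuinely different route from the paper. The paper does not verify the mean property directly: it instead proves the chain $L\leq \tilde{L}_p<\tilde{P}_p<A<\tilde{M}_p<\tilde{T}_p\leq Q$, using the known ordering ${\rm arctan}_p<{\rm arcsinh}_p<{\rm arcsin}_p<{\rm arctanh}_p$ (quoted from an earlier paper of Bhayo--Vuorinen), the comparisons $\tilde{L}_p\geq L$ and $\tilde{T}_p\leq Q$ obtained by comparing with the classical functions at $p=2$, and the known inequality $T<Q$; the mean property then follows because $L$ and $Q$ are means. Your argument instead reduces the statement to the exact defining inequalities $b\leq \tilde{X}_p\leq a$, i.e.\ $x/(1+x)\leq f_p(x)\leq x/(1-x)$, and proves these by elementary pointwise bounds on the integrands (your middle comparison $(1-t)^p\leq 1-t^p$ can even be shortened to $(1-t)^p\leq 1-t\leq 1-t^p$, avoiding the sign analysis of $h'$). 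What your approach buys is self-containedness --- no external lemmas, no appeal to properties of $T$ and $Q$ --- and validity for all $p>1$ rather than only $p\geq 2$ (the paper needs $p\geq 2$ precisely for the step $1+t^p\leq 1+t^2$ in proving $\tilde{T}_p\leq Q$). What the paper's approach buys is strictly more information: the full ordering of the four new means among themselves and relative to $L$, $A$ and $Q$, which is reused in later corollaries. Both arguments are sound; yours is the more economical proof of the literal statement, the paper's is a stronger theorem in disguise.
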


\begin{theorem}\label{monoton-thm}
For $x \in (0,1)$ and $1<p<q$, 
\begin{enumerate}
\item the function  $f_1(x)= \arcsin _p(x)/\arcsin _q(x)$   is strictly increasing,
\item the function  $f_2(x)= {\rm arcsinh}_p(x)/{\rm arcsinh}_q(x)$ is strictly decreasing,
\item the function  $f_3(x)= {\rm arctanh} _p(x)/{\rm arctanh} _q(x)$ is strictly increasing,
\item the function $f_4(x)= \arctan _p(x)/\arctan_q(x)$ is strictly decreasing (increasing) for $x \in (x,x_0)$ $(x\in(x_0,1))$,  
where $x_0$ is the unique solution  in $(0,1)$ to the equation $qx^{q-p} +(q-p)x^q-p=0.$
\end{enumerate}

In particular, for $2\leq p<q$ one has 
\begin{enumerate}
\item $\displaystyle\frac{\pi_q}{\pi_p} <\frac{\tilde{ P}_p}{\tilde{ P}_q}<1$,
\item $\displaystyle 1< \frac{\tilde{ M}_p}{\tilde{ M}_q}< \frac{c_q}{c_p}$,
\item $\displaystyle 1< \frac{\tilde{ L}_p}{\tilde{ L}_q}< \frac{q}{p}$,
\item $\displaystyle\frac{\tilde{T}_p}{\tilde{T}_q}<(>) \frac{ b_q}{b_p}$  for  $x \in (0,x_0) (x \in (x_0,1))$,
where $x= (a-b)/(a+b), (a>b>0)$,    $a,  b$ are the arguments of means, i.e.   $\tilde{T}_r=\tilde{T}_r(a,b)$,
and
\begin{equation}
b_p={\rm arctan}_p(1)=\frac{1}{2p}\left(\psi\left(\frac{1+p}{2p}\right)
-\psi\left(\frac{1}{2p}\right)\right)=2^{-\frac{1}{p}}
F\left(\frac{1}{p},\frac{1}{p};1+\frac{1}{p};\frac{1}{2}\right),
\end{equation}
 $$c_p={\rm arcsinh}_p(1)=\left(\frac{1}{2}\right)^{\frac{1}{p}}F\left(1,\frac{1}{p};1+\frac{1}{p},\frac{1}{2}\right).$$
\end{enumerate}
\end{theorem}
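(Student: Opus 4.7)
The plan is to establish the four monotonicity statements (1)--(4) by the L'Hôpital monotonicity rule (LMR) applied to each ratio $f_i=F/G$, where $F,G$ are the relevant generalized inverse functions from \eqref{pintegrals}, and then to deduce the ``In particular'' bounds by computing the endpoint limits $f_i(0^+)$ and $f_i(1^-)$. Note that in each case $F(0)=G(0)=0$ and $F',G'>0$ on $(0,1)$, so the hypotheses of LMR are in force.

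For parts (1)--(3), I would form the ratio of integrands
$$
\frac{F'(x)}{G'(x)}=\frac{(1-x^q)^{1/q}}{(1-x^p)^{1/p}},\qquad \frac{(1+x^q)^{1/q}}{(1+x^p)^{1/p}},\qquad \frac{1-x^q}{1-x^p},
$$
corresponding to $f_1,f_2,f_3$, and show each is monotone on $(0,1)$. For $f_1$ and $f_2$, the logarithmic derivatives reduce to $\tfrac{x^{p-1}}{1-x^p}-\tfrac{x^{q-1}}{1-x^q}$ and $\tfrac{x^{q-1}}{1+x^q}-\tfrac{x^{p-1}}{1+x^p}$, and signing them reduces to monotonicity in $s$ of the auxiliary function $\gamma_\pm(s):=x^{s-1}/(1\mp x^s) = [x(x^{-s}\mp 1)]^{-1}$; this becomes transparent from the fact that $x^{-s}$ is strictly increasing in $s$ for $x\in(0,1)$. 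For $f_3$, the substitution $u=x^p$, $r=q/p>1$ reduces $(1-x^q)/(1-x^p)$ to $(1-u^r)/(1-u)$; the auxiliary $\varphi(u):=1+(r-1)u^r-ru^{r-1}$ satisfies $\varphi(1)=0$ and $\varphi'(u)=r(r-1)u^{r-2}(u-1)<0$ on $(0,1)$, so $\varphi>0$ and $(1-u^r)/(1-u)$ is strictly increasing. In each case LMR transfers monotonicity of $F'/G'$ to $f_i=F/G$.

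For part (4), direct differentiation yields
$$
\frac{d}{dx}\,\frac{1+x^q}{1+x^p}=\frac{x^{p-1}\,h(x)}{(1+x^p)^2},\qquad h(x):=qx^{q-p}+(q-p)x^q-p.
$$
Since $h(0)=-p<0$, $h(1)=2(q-p)>0$, and $h'>0$ on $(0,1)$, the equation $h(x_0)=0$ has a unique solution $x_0\in(0,1)$, and $F'/G'=(1+x^q)/(1+x^p)$ strictly decreases on $(0,x_0)$ and strictly increases on $(x_0,1)$. Pinelis's piecewise refinement of LMR (or, equivalently, tracking the sign in $f_4'=(G'/G)\,[(F'/G')-(F/G)]$ using a Taylor expansion at $0$ and the observation that $F'(1)/G'(1)=1>b_p/b_q=F(1)/G(1)$) transfers this ``valley'' shape to $f_4$.

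For the ``In particular'' inequalities, rewrite
$$
\frac{\tilde P_p}{\tilde P_q}=\frac{1}{f_1(x)},\quad \frac{\tilde M_p}{\tilde M_q}=\frac{1}{f_2(x)},\quad \frac{\tilde L_p}{\tilde L_q}=\frac{1}{f_3(x)},\quad \frac{\tilde T_p}{\tilde T_q}=\frac{1}{f_4(x)},
$$
and evaluate the boundary limits: each integrand gives $\arcsin_p(x),\ldots,\arctan_p(x)\sim x$ as $x\to 0^+$, so $f_i(0^+)=1$ for all $i$, while
$$
f_1(1^-)=\frac{\pi_p}{\pi_q},\qquad f_2(1^-)=\frac{c_p}{c_q},\qquad f_3(1^-)=\frac{q}{p}\ (\text{by L'H\^opital, both sides divergent}),\qquad f_4(1^-)=\frac{b_p}{b_q}.
$$
Combining these with the respective monotonicity of $f_1,\dots,f_4$ pins down each two-sided bound. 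The main obstacle is part (4): the classical LMR cannot be applied on $(x_0,1)$ because $F(x_0),G(x_0)\neq 0$, so Pinelis's piecewise monotonicity result (or the direct sign analysis of $F'/G'-F/G$ using the asymptotics just noted) is needed to conclude the claimed valley shape of $f_4$ and, in particular, to locate the sign change of $\tilde T_p/\tilde T_q-b_q/b_p$ at $x_0$; the remaining parts are routine once the parameter monotonicity of $\gamma_\pm(s)$ and the asymptotics of the generalized inverse functions are in hand.
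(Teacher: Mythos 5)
Your argument for the core monotonicity claims is essentially the paper's own: for each $f_i=F/G$ you form the derivative ratio $F'/G'$ (exactly the four functions $h_1,\dots,h_4$ the paper works with), establish its monotonicity, and invoke Lemma \ref{lem0}; your logarithmic-derivative and $u=x^p$ substitutions are cosmetic variants of the paper's direct differentiation, and they check out. Two substantive remarks. First, you are right to flag part (4): the paper simply asserts that the conclusion ``follows from the monotonicity of $h_4$ and Lemma \ref{lem0}'', but the lemma anchored at $0$ is not applicable on $(x_0,1)$. In fact, since $f_4(x)$ is the $G'$-weighted average of $h_4$ over $[0,x]$ and $h_4$ is strictly decreasing up to $x_0$, one has $f_4(x_0)>h_4(x_0)$ and hence $f_4'(x_0)<0$: the turning point of $f_4$ is some $x_1>x_0$, not $x_0$ itself. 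So Pinelis's piecewise rule (or your direct sign analysis) yields only the valley shape of $f_4$, and neither your method nor the paper's actually locates the minimum at $x_0$ as claimed; this is a defect of the statement you should note rather than paper over. Second, when you ``combine'' the endpoint limits with the monotonicity for the second list, item (3) does not come out as printed: $f_3$ increases from $1$ to $q/p$, so $\tilde L_p/\tilde L_q=1/f_3\in(p/q,\,1)$, which is the reciprocal of the stated $1<\tilde L_p/\tilde L_q<q/p$ (contrast items (1) and (2), where the reciprocation is done consistently). Your method proves the corrected inequality $p/q<\tilde L_p/\tilde L_q<1$; you should state that explicitly instead of asserting that the printed bound is ``pinned down''.
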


\begin{theorem}\label{PpMp1} For $a>b>0$ and $x=(a-b)/(a+b)$, the following inequalities hold true,
\begin{equation}\label{(15-16)}
\tilde{P}_p  \tilde{M}_p \leq (\tilde{P}_{2p})^2\leq k(x,p) \tilde{P}_p \tilde{M}_p,
\end{equation}
where $$k(x,p)= \frac{((1+x^p)^{2/p}+ (1-x^p)^{2/p})^2}{ 4(1-x^{2p})^{1/p}}. $$
\end{theorem}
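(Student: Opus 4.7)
The strategy is to reduce (\ref{(15-16)}) to an integral inequality for the generalized inverse trigonometric and hyperbolic functions and then attack the two sides with Cauchy--Schwarz and a reverse Cauchy--Schwarz, respectively.

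Since $\tilde{P}_r = xA/{\rm arcsin}_r(x)$ for $r\in\{p,2p\}$ and $\tilde{M}_p = xA/{\rm arsinh}_p(x)$ by (\ref{pmeans}), the common factor $(xA)^2$ cancels and (\ref{(15-16)}) becomes
\[
({\rm arcsin}_{2p}(x))^2 \le {\rm arcsin}_p(x)\,{\rm arsinh}_p(x) \le k(x,p)\,({\rm arcsin}_{2p}(x))^2.
\]
The pivotal identity $(1-t^{2p})^{-1/(2p)} = (1-t^p)^{-1/(2p)}(1+t^p)^{-1/(2p)}$ suggests the splitting $f(t):=(1-t^p)^{-1/(2p)}$, $g(t):=(1+t^p)^{-1/(2p)}$, under which
\[
\int_0^x fg\,dt = {\rm arcsin}_{2p}(x), \quad \int_0^x f^2\,dt = {\rm arcsin}_p(x), \quad \int_0^x g^2\,dt = {\rm arsinh}_p(x).
\]

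The left inequality then follows immediately from Cauchy--Schwarz, $\bigl(\int_0^x fg\,dt\bigr)^2 \le \int_0^x f^2\,dt \cdot \int_0^x g^2\,dt$. For the right inequality I would invoke a reverse Cauchy--Schwarz, specifically the Polya--Szeg\H{o} inequality: on $[0,x]$ the functions satisfy $1 \le f \le (1-x^p)^{-1/(2p)}$ and $(1+x^p)^{-1/(2p)} \le g \le 1$, and the Polya--Szeg\H{o} constant is of precisely the shape appearing in $k(x,p)$. An equally natural route is to establish the pointwise bound
\[
(1-t^p)^{-1/p}+(1+t^p)^{-1/p} \le D(x,p)\,(1-t^{2p})^{-1/(2p)}, \qquad t\in[0,x],
\]
whose optimal constant $D(x,p)$ is attained at $t=x$, and then finish via the AM--GM inequality ${\rm arcsin}_p(x)\,{\rm arsinh}_p(x) \le \left(({\rm arcsin}_p(x) + {\rm arsinh}_p(x))/2\right)^2$.

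The main obstacle is the algebraic reshuffling needed to bring the constant produced by Polya--Szeg\H{o} (or by the pointwise-plus-AM--GM route) into the specific form of $k(x,p)$ stated in the theorem: the exponents $1/(2p)$ emerging from the square-root splitting of the integrand of ${\rm arcsin}_{2p}$ must be carefully recombined into the advertised $2/p$ and $1/p$ powers of $1\pm x^p$ and $1-x^{2p}$. The checks $k(0,p)=1$ (consistent with both inequalities collapsing to equality in the limit $x\to 0^+$) and monotonicity of the Polya--Szeg\H{o} ratio $\bigl((1+x^p)/(1-x^p)\bigr)^{1/(2p)}$ on $[0,x]$ should guide that simplification.
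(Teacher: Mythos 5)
Your proposal coincides with the paper's own proof: the identical factorization $\sqrt{(1-t^p)^{-1/p}(1+t^p)^{-1/p}}=(1-t^{2p})^{-1/(2p)}$ on $[0,x]$, Cauchy--Schwarz for the left inequality, and P\'olya--Szeg\H{o} with the bounds $1\le f\le (1-x^p)^{-1/(2p)}$, $(1+x^p)^{-1/(2p)}\le g\le 1$ for the right. The obstacle you flag at the end, however, is not a mere matter of reshuffling exponents: those bounds produce the P\'olya--Szeg\H{o} constant
\[
\frac{\bigl((1+x^p)^{1/(2p)}+(1-x^p)^{1/(2p)}\bigr)^{2}}{4\,(1-x^{2p})^{1/(2p)}},
\]
which is \emph{not} equal to the stated $k(x,p)$ (the latter is not even of the P\'olya--Szeg\H{o} shape $(a^{2}+b^{2})^{2}/(4a^{2}b^{2})$), and the paper passes over this identification in silence. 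In fact, expanding at $x=0$ gives $k(x,p)=1+\frac{4-p}{p^{2}}x^{2p}+O(x^{4p})$ while ${\rm arcsin}_p(x)\,{\rm arcsinh}_p(x)/{\rm arcsin}_{2p}(x)^{2}=1+\frac{x^{2p}}{(2p+1)(p+1)^{2}}+O(x^{4p})$, so the right-hand inequality of \eqref{(15-16)} with the printed $k$ actually fails for $p\ge 4$ and $a/b$ near $1$; with the displayed P\'olya--Szeg\H{o} constant in place of $k(x,p)$ your argument (and the paper's) is complete.
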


\begin{theorem}\label{PpMp2} For $a>b>0$ and $x=(a-b)/(a+b)$, the following inequalities
\begin{equation}\label{(23)}
\frac{1}{\tilde{P}_p} +\frac{r}{\tilde{M}_p} \leq \frac{r+1}{\tilde{P}_{2p}},
\end{equation}
 and
\begin{equation}\label{(26)}
(\tilde{P}_{2p})^{2p} \left(\frac{1}{(\tilde{P}_p)^p} + \frac{1}{(\tilde{M}_p)^p}\right) \leq  R(x,p),
\end{equation}
hold true,
where $$r=r(p,x)= \frac{(1+x^p)}{(1-x^p)}^{1/(2p)},$$
$$R=R(x,p)=[(1-x^{2p}) ^{1/(2p)}+ (1-x^{2p})^{-1/(2p)}]^{1/(2p)}/ 2^{2p-1}.$$
\end{theorem}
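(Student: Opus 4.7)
The plan for both inequalities is to cancel the common factor $xA$ from the definitions $\tilde P_p=xA/{\rm arcsin}_p(x)$, $\tilde M_p=xA/{\rm arcsinh}_p(x)$, $\tilde P_{2p}=xA/{\rm arcsin}_{2p}(x)$, thereby reducing each inequality to an integral inequality involving ${\rm arcsin}_p(x)$, ${\rm arcsinh}_p(x)$ and ${\rm arcsin}_{2p}(x)$. The structural observation that drives everything is the factorization
\[
(1-t^{2p})^{-1/(2p)}=(1-t^p)^{-1/(2p)}(1+t^p)^{-1/(2p)},
\]
so that, setting $u(t):=(1-t^p)^{-1/(2p)}$ and $v(t):=(1+t^p)^{-1/(2p)}$, the integrands of ${\rm arcsin}_p$, ${\rm arcsinh}_p$ and ${\rm arcsin}_{2p}$ are exactly $u^2$, $v^2$ and the geometric mean $uv$, respectively.

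For \eqref{(23)} this reduces the claim, after integrating from $0$ to $x$, to the pointwise inequality $u^2+r\,v^2\leq(r+1)uv$ on $[0,x]$, which factors as $(u-v)(u-rv)\leq 0$. The inequality $u\geq v$ is automatic from $1-t^p\leq 1+t^p$, so it remains to verify $u\leq rv$, i.e.\
\[
\left(\frac{u}{v}\right)^{2}=\frac{1+t^p}{1-t^p}\leq r^{2}=\frac{1+x^p}{1-x^p}.
\]
Since $t\mapsto(1+t^p)/(1-t^p)$ is increasing on $[0,1)$ this reduces to $t\leq x$, which holds on the whole interval of integration; integrating then yields \eqref{(23)} directly.

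For \eqref{(26)} the same normalization recasts the claim as a relation among ${\rm arcsin}_p(x)^p$, ${\rm arcsinh}_p(x)^p$ and ${\rm arcsin}_{2p}(x)^{2p}$. I would combine two ingredients. First, a pointwise monotonicity step in the spirit of \eqref{(23)}: since $(1\pm t^p)^{\pm 1/(2p)}$ are monotone on $[0,x]$, one can push $t$ up to $x$ inside the integral, producing precisely the boundary factors $(1-x^{2p})^{\pm 1/(2p)}$ that appear inside $R(x,p)$. Second, a convexity/power-mean step, using $(u^2+v^2)^p\leq 2^{p-1}(u^{2p}+v^{2p})$ for $p\geq 1$ (or its dual applied to the integrals), to provide the $2^{2p-1}$ normalization in the denominator of $R$. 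The \emph{main obstacle} is calibration: \eqref{(23)} benefits from a clean quadratic factorization that delivers the sharp pointwise constant for free, but in \eqref{(26)} the monotonicity extraction and the convexity step must be combined with some care so that the final bound is \emph{exactly} $R(x,p)$ rather than merely a weaker constant of the same shape.
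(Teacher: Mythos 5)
Your argument for \eqref{(23)} is correct and is in substance the same as the paper's: the paper invokes the Diaz--Metcalf inequality \eqref{(8)} for $f=\sqrt{F}$, $g=\sqrt{G}$ with $F=(1+t^p)^{-1/p}$, $G=(1-t^p)^{-1/p}$, $m=1$ and $M=r$, and your pointwise factorization $(u-v)(u-rv)\le 0$ followed by integration is exactly the standard proof of Diaz--Metcalf specialized to this pair, so you have simply unpacked the named lemma. (Two harmless exponent slips: with $u=(1-t^p)^{-1/(2p)}$ and $v=(1+t^p)^{-1/(2p)}$ one has $(u/v)^{2p}=(1+t^p)/(1-t^p)$ and $r^{2p}=(1+x^p)/(1-x^p)$, not the squares; the monotonicity of $t\mapsto(1+t^p)/(1-t^p)$ and the conclusion $v\le u\le rv$ on $[0,x]$ are unaffected.)

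For \eqref{(26)}, however, you have not given a proof. What you offer is a plan with an explicitly unresolved ``calibration'' step, and the two ingredients you name do not obviously combine to yield the stated bound. The quantity to be controlled is $\bigl(\int_0^x u^2\,dt\bigr)^p+\bigl(\int_0^x v^2\,dt\bigr)^p$ against $\bigl(\int_0^x uv\,dt\bigr)^{2p}$; the pointwise power-mean inequality $(u^2+v^2)^p\le 2^{p-1}(u^{2p}+v^{2p})$ acts on integrands rather than on the integrals raised to the $p$-th power, and ``pushing $t$ up to $x$'' in the monotone factors produces the boundary values $(1\mp x^p)^{\mp 1/(2p)}$ rather than the quantities $(1-x^{2p})^{\pm 1/(2p)}$ that appear in $R(x,p)$. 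The paper's route is genuinely different and supplies exactly what your sketch is missing: apply the reverse Minkowski inequality \eqref{(7)} with exponent $1/p\le 1$ to $f=(1+t^p)^{-1}$ and $g=(1-t^p)^{-1}$, whose sum is $2/(1-t^{2p})$, obtaining ${\rm arcsin}_p(x)^p+{\rm arcsinh}_p(x)^p\le 2\bigl(\int_0^x(1-t^{2p})^{-1/p}\,dt\bigr)^p$; then compare $\int_0^x(1-t^{2p})^{-1/p}\,dt=\int_0^x A(t)^2\,dt$ with ${\rm arcsin}_{2p}(x)^2=\bigl(\int_0^x A(t)\,dt\bigr)^2$ by the P\'olya--Szeg\H{o} inequality \eqref{(2)} applied to $A(t)=(1-t^{2p})^{-1/(2p)}$ and the constant function $1$. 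Without a substitute for these two steps your outline does not establish \eqref{(26)}.
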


\begin{theorem}\label{TpLp1} For $a>b>0$ and $x=(a-b)/(a+b)$, one has,
\begin{equation}\label{(20)}
A  \tilde{L}_{2p}\geq \tilde{T}_p \tilde{L}_p,
\end{equation}
\begin{equation}\label{(20b)}
\frac{A^2}{\tilde{T}_p \tilde{L}_p} -\frac{A}{\tilde{L}_{2p}} \leq  \frac{x^{2p}}{4(1-x^{2p})}.
\end{equation}
\end{theorem}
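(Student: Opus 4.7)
The plan is to reduce both inequalities to bounds on a single quantity. From the definitions in (\ref{pmeans}) one reads off $A^2/(\tilde T_p\tilde L_p) = {\rm arctan}_p(x)\,{\rm artanh}_p(x)/x^2$ and $A/\tilde L_{2p} = {\rm artanh}_{2p}(x)/x$, so setting
$$D(x) := {\rm arctan}_p(x)\cdot{\rm artanh}_p(x) - x\cdot{\rm artanh}_{2p}(x),$$
the first inequality is equivalent to $D(x)\ge 0$ and the second to $D(x)\le x^{2p+2}/[4(1-x^{2p})]$.

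The central step will be the Korkine-type double-integral identity
$$2D(x) \;=\; \int_0^x\!\int_0^x \frac{(t^p-s^p)^2}{(1-t^{2p})(1-s^{2p})}\, dt\, ds,$$
which I would derive by writing each of ${\rm arctan}_p(x)\cdot{\rm artanh}_p(x)$ and $x\cdot{\rm artanh}_{2p}(x)$ as a symmetric double integral on $[0,x]^2$ (using $x=\int_0^x 1\,ds$) and then invoking the algebraic identity $(1+s^p)(1-t^p)+(1+t^p)(1-s^p) = 2(1-t^p s^p)$ to combine the cross terms. Non-negativity of the integrand immediately gives the first inequality. As an alternative route, the first inequality also follows directly from Chebyshev's integral inequality applied to the increasing function $(1-t^p)^{-1}$ and the decreasing function $(1+t^p)^{-1}$ on $[0,x]$, whose product is $(1-t^{2p})^{-1}$.

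For the upper bound I would use the symmetry of the kernel to restrict the double integral to the triangle $0\le s\le t\le x$, on which $(t^p-s^p)^2\le t^{2p}$ and $(1-s^{2p})^{-1}\le (1-t^{2p})^{-1}$. Integrating out $s$ then yields
$$D(x)\le \int_0^x \frac{t^{2p+1}}{(1-t^{2p})^2}\, dt;$$
the substitution $u=t^{2p}$ (so that $t^{2p+1}\,dt = u^{1/p}\,du/(2p)$), together with the pointwise bound $u^{1/p}\le x^2$ on $[0,x^{2p}]$, converts the right-hand side into at most $(x^2/(2p))\int_0^{x^{2p}}(1-u)^{-2}\,du = x^{2p+2}/[2p(1-x^{2p})]$. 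Since $p\ge 2$ throughout (this is built into the definition (\ref{pmeans}) of the $\tilde{\cdot}_p$ means), we have $1/(2p)\le 1/4$, which finishes the proof after dividing by $x^2$.

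The main obstacle is spotting the Korkine-type identity: it simultaneously explains the sign of $D(x)$ and puts the difference in a form amenable to a pointwise estimate. Once the non-negative kernel $(t^p-s^p)^2/[(1-t^{2p})(1-s^{2p})]$ appears, the rest of the argument is a routine bound plus one substitution, with the hypothesis $p\ge 2$ playing its role only at the very last step.
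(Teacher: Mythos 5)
Your proof is correct, and for inequality \eqref{(20b)} it takes a genuinely different (more self-contained) route than the paper. The paper treats both inequalities as black-box applications of classical integral inequalities to $f(t)=(1+t^p)^{-1}$ and $g(t)=(1-t^p)^{-1}$ on $[0,x]$: Chebyshev's inequality \eqref{(4)} for functions of opposite monotonicity gives \eqref{(20)} (note the displayed relation \eqref{(19)} in the paper has its inequality sign reversed; the direction you state, $D(x)\ge 0$, is the one Chebyshev actually yields and the one \eqref{(20)} requires), and the Gr\"uss inequality \eqref{(5)} with $A-\alpha=x^p/(1+x^p)$, $B-\beta=x^p/(1-x^p)$ gives \eqref{(20b)} with the constant $1/4$ directly. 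You instead derive the Korkine identity $2D(x)=\int_0^x\!\int_0^x (t^p-s^p)^2\,[(1-t^{2p})(1-s^{2p})]^{-1}\,dt\,ds$ for these specific functions --- which is precisely the identity underlying both Chebyshev and Gr\"uss --- and then estimate the kernel pointwise on the triangle $s\le t$. Your computation is sound: the bound $\int_0^x t^{2p+1}(1-t^{2p})^{-2}\,dt\le x^{2p+2}/[2p(1-x^{2p})]$ is correct, and since the means are only defined for $p\ge 2$ the factor $1/(2p)\le 1/4$ closes the argument. What each approach buys: the Gr\"uss route is shorter and yields the constant $1/4$ for all $p>1$; your route is elementary and self-contained, and in fact proves the sharper bound with $1/(2p)$ in place of $1/4$ for $p>2$, though it would not recover the stated constant on $1<p<2$ if one wished to extend the theorem there.
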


\begin{theorem}\label{TpLp-single1} For $a>b>0$ and $x=(a-b)/(a+b)$, one has,
\begin{equation}\label{(31)}
 \frac{4(1+x^p)(1+ x^p/(p+1))}{(x^p+2)^2}\leq  \frac{\tilde{T}_p}{A}  \leq  1+\frac{x^p}{1+p},
\end{equation}
and
\begin{equation}\label{(32)}
\frac{4(1-x^p)(1- x^p/(1+p))}{(2-x^p)^2}\leq  \frac{\tilde{L}_p}{A}  \leq  1-\frac{x^p}{1+p}.
\end{equation}
\end{theorem}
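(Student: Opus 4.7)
The plan is to reduce both inequalities in the theorem to a single convex-analysis computation on the integral representations of $\arctan_p$ and ${\rm arctanh}_p$. Substituting $t=xs$ in the definitions from \eqref{pintegrals} and then $u=s^p$ gives
\[
\frac{\arctan_p(x)}{x}=\frac{1}{p}\int_0^1\frac{u^{1/p-1}}{1+x^p u}\,du,\qquad \frac{{\rm arctanh}_p(x)}{x}=\frac{1}{p}\int_0^1\frac{u^{1/p-1}}{1-x^p u}\,du,
\]
in which $\tfrac{1}{p}u^{1/p-1}$ is a probability density on $[0,1]$ with mean $1/(p+1)$. On $[0,1]$ the integrands $u\mapsto (1\pm x^p u)^{-1}$ are convex and positive (the minus case uses $x^p<1$).

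Jensen's inequality for this density yields
\[
\frac{\arctan_p(x)}{x}\geq\frac{p+1}{p+1+x^p},\qquad \frac{{\rm arctanh}_p(x)}{x}\geq\frac{p+1}{p+1-x^p},
\]
and taking reciprocals produces the right-hand inequalities $\tilde{T}_p/A\leq 1+x^p/(p+1)$ and $\tilde{L}_p/A\leq 1-x^p/(p+1)$ in \eqref{(31)} and \eqref{(32)}. For the left-hand inequalities I would invoke the chord bound $f(u)\leq (1-u)f(0)+uf(1)$ valid for a convex $f$ on $[0,1]$; integrating it against the same density produces
\[
\frac{\arctan_p(x)}{x}\leq\frac{p+1+px^p}{(p+1)(1+x^p)},\qquad \frac{{\rm arctanh}_p(x)}{x}\leq\frac{p+1-px^p}{(p+1)(1-x^p)},
\]
so $\tilde{T}_p/A\geq (p+1)(1+x^p)/(p+1+px^p)$ and $\tilde{L}_p/A\geq (p+1)(1-x^p)/(p+1-px^p)$.

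It remains to check that these chord estimates are at least as strong as the stated left-hand bounds. This follows from the algebraic identities
\[
(p+1)^2(x^p+2)^2-4(p+1+px^p)(p+1+x^p)=x^{2p}(p-1)^2,
\]
\[
(p+1)^2(2-x^p)^2-4(p+1-px^p)(p+1-x^p)=x^{2p}(p-1)^2,
\]
both manifestly nonnegative. Rearranging each one gives
\[
\frac{(p+1)(1\pm x^p)}{p+1\pm px^p}\geq \frac{4(1\pm x^p)(1\pm x^p/(p+1))}{(2\pm x^p)^2},
\]
which combined with the chord inequalities finishes the proof. The main obstacle is the initial recognition of the probabilistic/convex-analytic reformulation via the substitution $u=s^p$; after that, the Jensen and chord steps are one-line applications, and the two polynomial identities are routine expansions.
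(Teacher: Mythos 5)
Your proof is correct, but it takes a genuinely different route from the paper's. The paper works directly with the product $\int_0^x F(t)\,dt\cdot\int_0^x F(t)^{-1}\,dt$ for $F(t)=1\pm t^p$: the Cauchy--Bouniakowski inequality bounds this product below by $x^2$ (giving the right-hand sides of \eqref{(31)} and \eqref{(32)} after computing $\int_0^x(1\pm t^p)\,dt=x\bigl(1\pm x^p/(p+1)\bigr)$), and the P\'olya--Szeg\H{o}/Schweizer inequality bounds it above by $x^2(\alpha+A)^2/(4\alpha A)$, which lands exactly on the stated left-hand constants. Your upper-bound halves are equivalent in content to the paper's Cauchy--Schwarz step: Jensen applied to the convex integrand against the density $\tfrac1p u^{1/p-1}$ again reduces to $\arctan_p(x)\ge x^2/\int_0^x(1+t^p)\,dt$, just packaged probabilistically. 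The real divergence is in the lower bounds: instead of P\'olya--Szeg\H{o} you use the secant (chord) estimate for a convex function, which yields the strictly sharper intermediate bounds $\tilde{T}_p/A\ge (p+1)(1+x^p)/(p+1+px^p)$ and $\tilde{L}_p/A\ge (p+1)(1-x^p)/(p+1-px^p)$, and you then need the two polynomial identities (both of which I checked; the discrepancy is exactly $x^{2p}(p-1)^2\ge 0$) to recover the stated form. What your approach buys is a self-contained argument avoiding the P\'olya--Szeg\H{o} machinery and, as a bonus, a refinement of the theorem's left-hand bounds; what the paper's approach buys is that the stated constants appear immediately, with no final algebraic comparison needed.
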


\begin{theorem}\label{TpLp-single2} For $a>b>0$ and $x=(a-b)/(a+b)$, one has,
\begin{equation}\label{APp}
 \frac{p\,x}{B\left(1/p, 1+1/p\right)} \leq \frac{A}{\tilde{P}_p} \leq  \frac{p\,x \,B(1/p, 1+1/p)(2-x^p)^2]}{4(1-x^p)B(1/p, 1+1/p)]}   
\end{equation}
and
\begin{equation}\label{AMp}
\frac{x}{j(x,p)}\leq  \frac{A}{\tilde{M}_p} \leq \frac{x(1+(1+x^p)^{1/p})}{4j(x,p)(1+x^p)^{1/p}},
\end{equation}
where $j(x,p)=\int_0^x{(1+t^p)^{1/p}}dt$.
\end{theorem}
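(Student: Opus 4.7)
The first key observation is that by the defining formulas \eqref{pmeans},
\[
\frac{A}{\tilde{P}_p}=\frac{{\rm arcsin}_p(x)}{x}=\frac{1}{x}\int_0^x(1-t^p)^{-1/p}\,dt,\qquad \frac{A}{\tilde{M}_p}=\frac{{\rm arcsinh}_p(x)}{x}=\frac{1}{x}\int_0^x(1+t^p)^{-1/p}\,dt,
\]
so both quantities are just the mean of $(1\pm t^p)^{-1/p}$ over $[0,x]$. The plan is to pair each of these integrals with its ``conjugate'' $\int_0^x(1\pm t^p)^{1/p}\,dt$ and apply two classical inequalities on $[0,x]$: Cauchy--Schwarz for the lower bounds, and the P\'olya--Szeg\H{o} (Kantorovich) inequality for the upper bounds.

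For the lower bounds, I would write
\[
x^2=\left(\int_0^x(1\pm t^p)^{-1/(2p)}(1\pm t^p)^{1/(2p)}\,dt\right)^2\leq \int_0^x(1\pm t^p)^{-1/p}\,dt\,\cdot\,\int_0^x(1\pm t^p)^{1/p}\,dt.
\]
With the $+$ sign this is $x^2\leq {\rm arcsinh}_p(x)\cdot j(x,p)$, which is exactly the left inequality in \eqref{AMp}. With the $-$ sign it gives $x^2\leq {\rm arcsin}_p(x)\cdot J(x,p)$ with $J(x,p):=\int_0^x(1-t^p)^{1/p}\,dt$. Since $J(x,p)$ is increasing in $x$, the substitution $u=t^p$ yields
\[
J(x,p)\leq J(1,p)=\frac{1}{p}\int_0^1 u^{1/p-1}(1-u)^{1/p}\,du=\frac{1}{p}B\!\left(\frac{1}{p},1+\frac{1}{p}\right),
\]
and inserting this into $x^2\leq {\rm arcsin}_p(x)\cdot J(x,p)$ produces the lower bound in \eqref{APp}.

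For the upper bounds, apply the P\'olya--Szeg\H{o} inequality
\[
\int_0^x f(t)\,dt\cdot\int_0^x\frac{dt}{f(t)}\leq\frac{(M+m)^2}{4Mm}\,x^2,\qquad 0<m\leq f\leq M,
\]
to $f(t)=(1+t^p)^{1/p}$, which on $[0,x]$ is increasing with $m=1$ and $M=(1+x^p)^{1/p}$. This produces
\[
j(x,p)\cdot{\rm arcsinh}_p(x)\leq\frac{\bigl(1+(1+x^p)^{1/p}\bigr)^{2}}{4\,(1+x^p)^{1/p}}\,x^2,
\]
which on division by $x\,j(x,p)\,(1+x^p)^{1/p}$ gives the upper half of \eqref{AMp}. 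The upper bound in \eqref{APp} is obtained in the same spirit: apply P\'olya--Szeg\H{o} to $f(t)=(1-t^p)^{1/p}$ (decreasing on $[0,x]$, so $M=1$, $m=(1-x^p)^{1/p}$), and then replace $J(x,p)$ by a convenient closed-form lower bound, which one can extract from the concavity of $t\mapsto(1-t^p)^{1/p}$ on $[0,1]$ for $p\geq 2$ (trapezoidal estimate $J(x,p)\geq \tfrac{x}{2}(1+(1-x^p)^{1/p})$) together with $J(x,p)\leq B(1/p,1+1/p)/p$.

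The structural core is therefore very short: two reciprocal-type integral inequalities plus a monotonicity/concavity step. The only genuine verification the proof needs is the concavity of $(1-t^p)^{1/p}$ for $p\ge 2$, which is the main (and mild) obstacle; once its second derivative is shown to be nonpositive, everything else is bookkeeping with the substitution $u=t^p$ to identify $J(1,p)$ with the beta value $B(1/p,1+1/p)/p$.
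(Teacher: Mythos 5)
Your route is the same as the paper's: Cauchy--Bouniakowski applied to $\sqrt{F}\cdot 1/\sqrt{F}$ for the lower bounds, the P\'olya--Szeg\H{o} (Schweitzer) inequality for the upper bounds, with $F(t)=(1\pm t^p)^{1/p}$, and the substitution $u=t^p$ to identify $\int_0^1(1-t^p)^{1/p}\,dt$ with $B(1/p,1+1/p)/p$. The two lower bounds are correct and complete as you give them.

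The upper bounds, however, do not land where you claim. For \eqref{AMp} your own display is
$j(x,p)\,{\rm arcsinh}_p(x)\leq \frac{(1+(1+x^p)^{1/p})^{2}}{4(1+x^p)^{1/p}}x^2$, and dividing by $x\,j(x,p)$ gives an upper bound for $A/\tilde{M}_p$ containing $(1+(1+x^p)^{1/p})^{2}$, not the first power appearing in \eqref{AMp}; these differ by a factor $1+(1+x^p)^{1/p}\geq 2$, so what you prove is strictly weaker than what is printed. In fact the printed bound cannot be correct: as $x\to 0^+$ the left side of \eqref{AMp} tends to $1$ while the printed right side tends to $1/2$, so the missing square is a typo in the theorem (the paper's own proof, which runs exactly as yours, also silently drops it). You should state explicitly that your argument establishes the squared version. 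The situation with the upper bound of \eqref{APp} is worse: your trapezoidal estimate $J(x,p)\geq \tfrac{x}{2}(1+(1-x^p)^{1/p})$ (the concavity of $(1-t^p)^{1/p}$ is fine, since its second derivative is $-(p-1)t^{p-2}(1-t^p)^{1/p-2}<0$) yields
${\rm arcsin}_p(x)/x\leq \frac{(1+(1-x^p)^{1/p})}{2(1-x^p)^{1/p}}$, which is a perfectly good bound but is not the expression in \eqref{APp}; the printed expression is itself garbled (the two beta factors cancel, and the resulting quantity tends to $0$ as $x\to0^+$ while $A/\tilde{P}_p\to1$), so saying the bound is obtained ``in the same spirit'' hides the fact that no choice of lower bound for $J(x,p)$ can produce it. Concretely: replace the last paragraph of your upper-bound discussion by a precise statement of the two inequalities your method actually delivers, and note that the right-hand sides of \eqref{APp} and \eqref{AMp} as printed need correction.
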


The paper is organized as follows. In section 1, we give the definition of the special functions involved in our formulation. Section 2 is dedicated to the definition of new means and the statement of the main result. Section 3 consists of preliminary earlier and related results, which will be used in the proving procedures sequel. Section 4 gives the proof of the main result.
\section{\bf Preliminaries and lemmas}

\begin{lemma}\cite[Theorem 2]{avv1}\label{lem0}
For $-\infty<a<b<\infty$,
let $f,g:[a,b]\to \mathbb{R}$
be continuous on $[a,b]$, and be differentiable on
$(a,b)$. Let $g^{'}(x)\neq 0$
on $(a,b)$. If $f^{'}(x)/g^{'}(x)$ is increasing
(decreasing) on $(a,b)$, then so are
$$\frac{f(x)-f(a)}{g(x)-g(a)}\quad and \quad \frac{f(x)-f(b)}{g(x)-g(b)}.$$
If $f^{'}(x)/g^{'}(x)$ is strictly monotone,
then the monotonicity in the conclusion
is also strict.
\end{lemma}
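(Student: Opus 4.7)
My plan is to prove the monotonicity of $H_1(x)=(f(x)-f(a))/(g(x)-g(a))$ using the Cauchy Mean Value Theorem together with the classical ``mediant inequality'' (if $\alpha/\beta\le \gamma/\delta$ with $\beta,\delta$ of the same sign, then $\alpha/\beta\le (\alpha+\gamma)/(\beta+\delta)\le \gamma/\delta$). I would then deduce the statement for $H_2(x)=(f(x)-f(b))/(g(x)-g(b))$ by applying the same argument at the right endpoint.

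First I would fix any two points $a<x_1<x_2\le b$ and note that, since $g'$ does not vanish on $(a,b)$, $g'$ has constant sign and hence $g$ is strictly monotone, so $g(x_1)-g(a)$ and $g(x_2)-g(x_1)$ are both nonzero with the same sign. Applying Cauchy's Mean Value Theorem twice yields points $\xi\in(a,x_1)$ and $\eta\in(x_1,x_2)$ with
\begin{equation*}
\frac{f(x_1)-f(a)}{g(x_1)-g(a)}=\frac{f'(\xi)}{g'(\xi)},\qquad \frac{f(x_2)-f(x_1)}{g(x_2)-g(x_1)}=\frac{f'(\eta)}{g'(\eta)}.
\end{equation*}
Since $\xi<\eta$ and $f'/g'$ is assumed increasing on $(a,b)$, the first ratio is at most the second. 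The mediant inequality (valid since the two denominators $g(x_1)-g(a)$ and $g(x_2)-g(x_1)$ share a sign, and their sum equals $g(x_2)-g(a)$) then gives
\begin{equation*}
\frac{f(x_1)-f(a)}{g(x_1)-g(a)}\le \frac{(f(x_1)-f(a))+(f(x_2)-f(x_1))}{(g(x_1)-g(a))+(g(x_2)-g(x_1))}=\frac{f(x_2)-f(a)}{g(x_2)-g(a)},
\end{equation*}
which says exactly that $H_1$ is increasing on $(a,b]$. If $f'/g'$ is strictly increasing then $\xi<\eta$ forces strict inequality at the first step, and the mediant argument preserves strictness, yielding strict monotonicity.

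For the second quotient $H_2$, I would take $a\le x_1<x_2<b$ and apply Cauchy's Mean Value Theorem on $[x_1,x_2]$ and on $[x_2,b]$ to obtain $\xi\in(x_1,x_2)$ and $\eta\in(x_2,b)$ with $\xi<\eta$; the same mediant manipulation (now combining the interval $[x_1,x_2]$ with $[x_2,b]$ to recover $[x_1,b]$) shows $H_2(x_1)\le H_2(x_2)$. The decreasing case is handled by replacing $f$ by $-f$, which reverses all inequalities uniformly.

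I do not anticipate a substantial obstacle. The only care required is the sign bookkeeping in the mediant step when $g$ is decreasing, which is resolved by noting that both $g(x_1)-g(a)$ and $g(x_2)-g(x_1)$ carry the same sign and multiplying numerator and denominator by $-1$ if needed; this leaves the inequality unchanged.
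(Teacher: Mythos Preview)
Your argument is correct. The Cauchy Mean Value Theorem together with the mediant inequality yields exactly the monotonicity of $H_1$ and $H_2$, and the sign bookkeeping you describe (constant sign of $g'$, hence of all increments of $g$) is the right way to handle the decreasing case.

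As for comparison with the paper: note that the paper does \emph{not} prove this lemma at all. It is stated with a citation to \cite[Theorem~2]{avv1} and used as a black box. So there is no ``paper's own proof'' to compare against. For context, the argument in the cited source proceeds by differentiating $H_1$ directly: one writes
\[
H_1'(x)=\frac{g'(x)}{g(x)-g(a)}\left(\frac{f'(x)}{g'(x)}-\frac{f(x)-f(a)}{g(x)-g(a)}\right),
\]
applies Cauchy's Mean Value Theorem to replace the second term in the parenthesis by $f'(c)/g'(c)$ for some $c\in(a,x)$, and then reads off the sign from the monotonicity of $f'/g'$ together with the positivity of $g'(x)/(g(x)-g(a))$. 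Your route is a discrete version of the same idea: instead of differentiating, you compare $H_1(x_1)$ and $H_1(x_2)$ via the mediant. Both approaches rest on the same single application of Cauchy's theorem; yours has the minor advantage of not requiring $H_1$ to be differentiable (which is automatic here anyway), while the derivative computation makes the mechanism slightly more transparent.
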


For the proof of the following lemma, see \cite[Theorem 2.1]{bbv}.
\begin{lemma}\label{lem1} For $x\in(0,1)$, 
\begin{enumerate}
\item the function
$p\mapsto {\rm arcsin}_p(x)$ and $p\mapsto {\rm arctanh}_p(x)$
are strictly decreasing and $\log$-convex on $(1,\infty)$. Moreover, 
$p\mapsto {\rm arcsin}_p(x)$ is strictly geometrically convex on $(1,\infty)$.
\item The function $p\mapsto {\rm arctan}_p(x)$ is strictly increasing and concave on $(1,\infty)$.
\end{enumerate}
\end{lemma}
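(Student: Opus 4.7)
The plan is to work directly from the integral representations
\begin{equation*}
{\rm arcsin}_p(x) = \int_0^x (1-t^p)^{-1/p}\,dt,\quad {\rm arctanh}_p(x) = \int_0^x (1-t^p)^{-1}\,dt,\quad {\rm arctan}_p(x) = \int_0^x (1+t^p)^{-1}\,dt,
\end{equation*}
and to reduce each monotonicity or convexity property in $p$ to the corresponding pointwise property of the integrand for fixed $t\in(0,1)$.

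For the \emph{monotonicity} part, I note that for $t\in(0,1)$ the function $p\mapsto t^p$ is strictly decreasing, so $(1-t^p)^{-1}$ is strictly decreasing and $(1+t^p)^{-1}$ is strictly increasing in $p$; integration in $t$ gives the monotonicity of ${\rm arctanh}_p(x)$ and ${\rm arctan}_p(x)$. For the integrand $(1-t^p)^{-1/p}$ I would differentiate $L(p)=-(1/p)\log(1-t^p)$ to obtain
\begin{equation*}
L'(p)=\frac{\log(1-t^p)}{p^2}+\frac{(\log t)\,t^p}{p(1-t^p)},
\end{equation*}
a sum of two strictly negative terms when $t\in(0,1)$; hence the integrand, and then ${\rm arcsin}_p(x)$, is strictly decreasing in $p$.

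For \emph{log-convexity in $p$} of the ${\rm arcsin}_p$ and ${\rm arctanh}_p$ integrands I would use the expansion $-\log(1-t^p)=\sum_{n\ge 1} t^{np}/n$ and write
\begin{equation*}
\log\!\bigl[(1-t^p)^{-1}\bigr]=\sum_{n\ge 1}\frac{e^{np\log t}}{n},\qquad \log\!\bigl[(1-t^p)^{-1/p}\bigr]=\sum_{n\ge 1}\frac{e^{np\log t}}{np}.
\end{equation*}
Each term of the first series is convex in $p$ (its second derivative is a positive multiple of itself); each term of the second is log-convex in $p$ since its logarithm $np\log t-\log(np)$ has second derivative $1/p^2>0$, hence in particular convex. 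A sum of convex functions is convex, so both integrands are log-convex in $p$, and log-convexity passes to the $t$-integral by H\"older's inequality:
\begin{equation*}
\int_0^x\!\phi((1{-}\lambda)p_0{+}\lambda p_1,t)\,dt\le \int_0^x\!\phi(p_0,t)^{1-\lambda}\phi(p_1,t)^{\lambda}\,dt\le \Bigl(\int_0^x\!\phi(p_0,t)\,dt\Bigr)^{1-\lambda}\Bigl(\int_0^x\!\phi(p_1,t)\,dt\Bigr)^{\lambda}.
\end{equation*}
The concavity of ${\rm arctan}_p(x)$ is analogous: setting $u=t^p$, a direct computation gives $\partial_p^2(1+t^p)^{-1}=(\log t)^2\, u(u-1)/(1+u)^3$, which is strictly negative for $u\in(0,1)$, so the integrand is strictly concave in $p$ and integration in $t$ preserves the property.

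The hardest step is the \emph{geometric convexity} of $p\mapsto{\rm arcsin}_p(x)$. In contrast to log-convexity, the termwise argument breaks down: after the substitution $p=e^s$ the natural summands $e^{-s-an e^s}/n$ (with $a=-\log t$) are log-\emph{concave} in $s$. I would therefore still try to verify geometric convexity of $(1-t^p)^{-1/p}$ pointwise in $p$, so that the same H\"older chain applied now at the weighted geometric mean $p_0^{1-\lambda}p_1^{\lambda}$ transports the property to the integral. The pointwise task reduces to showing that $q\mapsto q\,h'(q)$ is increasing on $(1,\infty)$, where $h(p)=-(1/p)\log(1-t^p)$, i.e.\ to a two-variable inequality in $p>1$ and $u=t^p\in(0,1)$. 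I expect this last estimate, likely handled by eliminating $p$ via $u=e^{-ap}$ to reduce to a single-variable inequality that can be checked by differentiation, to be the main obstacle.
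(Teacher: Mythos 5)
The paper offers no proof of this lemma at all --- it simply cites \cite[Theorem 2.1]{bbv} --- so any self-contained argument is necessarily a different route. Your treatment of the monotonicity statements, of the log-convexity of $p\mapsto{\rm arcsin}_p(x)$ and $p\mapsto{\rm arctanh}_p(x)$, and of the concavity of $p\mapsto{\rm arctan}_p(x)$ is correct and complete: the reduction to pointwise-in-$t$ properties of the integrands is legitimate, the sign computations (the two negative terms in $L'(p)$, the factor $u(u-1)<0$, the termwise convexity of $\sum_n e^{np\log t}/(np)$) all check out, and the H\"older chain correctly transports log-convexity from the integrand to the integral. Your observation that the termwise argument genuinely fails for geometric convexity (the summands become log-concave after $p=e^s$) is also accurate.

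The one gap is that the geometric-convexity step is left as a plan rather than a proof, and you overestimate the difficulty: the pointwise estimate you defer is immediate. With $h(p)=\log\bigl[(1-t^p)^{-1/p}\bigr]=-(1/p)\log(1-t^p)$, $u=t^p$ and $a=-\log t>0$, one has
\begin{equation*}
p\,h'(p)=\frac{\log(1-u)}{p}-\frac{au}{1-u},
\end{equation*}
and differentiating in $p$ (using $du/dp=-au$) gives
\begin{equation*}
\frac{d}{dp}\bigl(p\,h'(p)\bigr)=\frac{a^2u}{(1-u)^2}+\frac{au}{p(1-u)}-\frac{\log(1-u)}{p^2},
\end{equation*}
a sum of three strictly positive terms for $t\in(0,1)$ and $p>1$ (recall $\log(1-u)<0$). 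Hence $s\mapsto h(e^s)$ is strictly convex, i.e.\ the integrand is strictly geometrically convex in $p$ for each fixed $t$, and your H\"older chain evaluated at $p_0^{1-\lambda}p_1^{\lambda}$ finishes the argument exactly as you describe; no elimination of $p$ or single-variable reduction is needed. Once this computation is inserted, your proof is complete.
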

It is easy to observe that the function $p\mapsto {\rm arcsinh}_p(x)$ is strictly decreasing on 
$(1,\infty)$.

\begin{lemma}\label{Lemma1j} We have
\begin{enumerate}
\item The function   $f(t)= (1+t^p)^{-1}$  is strictly decreasing  for $t \in (0,1)$,
\item The function  $g(t)= (1-t^p)^{1}$ is strictly increasing on $(0,1)$,
\item The function $h(t)= (1-t^p)^{-1/p}$ is strictly increasing on $(0,1)$,
\item The function  $s(t)=  (1+t^p)^{-1/p}$ is strictly decreasing on $(0,1)$.
\end{enumerate}
\end{lemma}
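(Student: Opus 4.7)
The plan is purely computational: each of the four functions is a power of a simple algebraic expression in $t^p$, so their monotonicity is settled by one differentiation followed by a sign check. No inequality technique or auxiliary lemma is required.

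First I would observe the common ingredient: for $t \in (0,1)$ and $p \geq 2$ (the regime considered throughout the paper) one has $t^{p-1} > 0$ and $1 \pm t^p > 0$, so every negative or fractional power that appears is well-defined and positive, and only the sign of the numerator of the derivative is at stake.

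Next I would differentiate each function. For part (1),
\begin{equation*}
f'(t) = -p\, t^{p-1} (1+t^p)^{-2} < 0,
\end{equation*}
so $f$ is strictly decreasing. For part (3),
\begin{equation*}
h'(t) = t^{p-1} (1-t^p)^{-1/p - 1} > 0,
\end{equation*}
so $h$ is strictly increasing. For part (4),
\begin{equation*}
s'(t) = -t^{p-1} (1+t^p)^{-1/p - 1} < 0,
\end{equation*}
so $s$ is strictly decreasing. For part (2), I would read the printed exponent $1$ as a typographical slip (since $(1-t^p)^1$ is manifestly decreasing on $(0,1)$, contradicting the claim) and treat $g(t)=(1-t^p)^{-1}$; then
\begin{equation*}
g'(t) = p\, t^{p-1} (1-t^p)^{-2} > 0,
\end{equation*}
giving the asserted monotonicity. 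Alternatively, each claim follows at once from the fact that $t\mapsto t^p$ is strictly increasing on $(0,1)$, composed with a monotone outer map ($u\mapsto (1+u)^{-1}$, $u\mapsto (1-u)^{-1}$, etc.).

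The main obstacle is simply bookkeeping: there is no real obstacle — the entire argument is a one-line derivative computation per part, presented in a compact parallel display, followed by the elementary sign observation above.
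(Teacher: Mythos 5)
Your proof is correct and takes essentially the same route as the paper, whose entire proof is the one-line remark that the claims are immediate consequences of the definitions; your derivative computations merely supply the details. Your reading of part (2) as a typographical slip for $(1-t^p)^{-1}$ is also the right call: that is the function actually needed in the proof of Theorem~\ref{TpLp1}, where the product $f(t)g(t)=(1-t^{2p})^{-1}$ must integrate to ${\rm arctanh}_{2p}(x)$, and only the reciprocal is increasing as claimed.
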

\begin{proof} These are immediate consequences of definitions. \end{proof}

For easy reference we recall some well-known inequalities from the literature as follows.

\bigskip

\noindent{\bf Cauchy-Bouniakowski inequality.} If $f,g:[a,b]\to \mathbb{R}$ are integrable, then
\begin{equation}\label{(1)} 
\displaystyle\left(\int_a^b{f(x)g(x)}dx\right)^2 \leq \int_a^b{f(x)^2}dx \int_a^b{g(x)^2}dx.
\end{equation}

\bigskip

\noindent{\bf P\'olya-Szeg\H{o} inequality.} If $f,g:[a,b]\to \mathbb{R}$ are integrable, 
and for all $x\in[a,b]$
$$0<\alpha<f(x)<A,\quad 0<\beta<g(x)<B,$$
then
\begin{equation}\label{(2)}
 \frac{\int_a^b{f(x)^2}dx \int_a^b{g(x)^2}dx}{\left(\int_a^b{f(x)g(x)}dx\right)^2}  \leq   K(\alpha,A,\beta,B),
\end{equation}
where $$K=K(\alpha,A,\beta,B)=  \frac{1}{4}\left(\sqrt{\frac{AB}{\alpha\beta}} + \sqrt{\frac{\alpha\beta}{AB}}\right)^2.$$

\bigskip

\noindent{\bf Chebyshev's inequality.} Let $f,g:[a,b]\to \mathbb{R}$ be integrable. 
If $f$ and $g$ have same type of monotonicity, 
then
\begin{equation}\label{(3)}
\int_a^b{f(x)}dx\cdot \int_a^b{g(x)}dx  \leq   (b-a)\int_a^b{f(x)g(x)}dx.
\end{equation}
If $f$ and $g$ have distinct type of monotonicity, then  
\begin{equation}\label{(4)}
\int_a^b{f(x)}dx\cdot \int_a^b{g(x)}dx  \geq   (b-a)\int_a^b{f(x)g(x)}dx.
\end{equation}

\bigskip

\noindent{\bf Gr\"uss inequality.} If $f,g:[a,b]\to \mathbb{R}$ are integrable,
and for all $x\in[a,b]$
$$0<\alpha<f(x)<A,\quad 0<\beta<g(x)<B,$$
then
\begin{equation}\label{(5)}
\left|(b-a)\int_a^b{f(x)g(x)}dx-\int_a^b{f(x)}dx\cdot \int_a^b{g(x)}dx\right|  \leq   \frac{(b-a)^2}{4}\cdot (A-\alpha)(B-\beta).
\end{equation}

\bigskip

\noindent{\bf Minkowski's inequality.} Let $f,g:[a,b]\to \mathbb{R}$ be integrable and $f,g>0$. Write  

$$h_t(f)=\left(\int_a^b{f(x)^t}dx\right)^{1/t}.$$
Then one has
\begin{equation}\label{(6)}
h_t(f+g)\leq h_t(f)+h_t(g),\quad {\rm for}\quad t\geq 1,
\end{equation}
\begin{equation}\label{(7)}
h_t(f+g)\geq h_t(f)+h_t(g),\quad {\rm for}\quad t\leq 1.
\end{equation}

\bigskip

\noindent{\bf Diaz-Metcalf inequality.} Let $f,g:[a,b]\to \mathbb{R}$ be integrable and suppose that 
there exist constants $m$ and $M$ such that 
$$m\leq g(x)/f(x) \leq M.$$ Then one has
\begin{equation}\label{(8)}
\int_a^b {g^2(x)}dx+m\cdot M\cdot \int_a^b {f^2(x)}dx\leq (m+M)\cdot \int_a^b {f(x)g(x)}dx.
\end{equation}


\section{\bf Proof of main result and corollaries}

\noindent{\bf Proof of Theorem \ref{thm1-2704}.} It is enough to prove that for $p\geq 2$ the following inequalities
\begin{equation}\label{ineq2803-a}
L\leq \tilde{L}_p<\tilde{P}_p<A <\tilde{M}_p< \tilde{T}_p \leq Q
\end{equation}
hold true, where $Q=Q(a,b)=\sqrt{(a^2+b^2)/2}$ is root square mean.
For $p>1$ and $x\in(0,1)$, the following inequalities
$$ {\rm arctan}_p(x)< {\rm arcsinh}_p(x)< {\rm arcsin}_p(x)< {\rm arctanh}_p(x)$$ 
(see \cite[Lemma 9]{bv-eigen}) imply that 
\begin{equation}\label{ineq2803}
\tilde{L}_p< \tilde{P}_p < \tilde{M}_p< \tilde{T}_p.
\end{equation}
It is sufficient to prove that $\tilde{L}_p$ and $\tilde{T}_p$ are means.
Since $\tilde{P}_p < A$,  and from the monotonicity of $x/{\rm arcsinh}_p(x)$ we get  
$M_p >A$, so \eqref{ineq2803} can be completed as:
$$\tilde{L}_p< \tilde{P}_p <A< \tilde{M}_p< \tilde{T}_p ,\quad p>1.$$
Clearly, ${\rm arctanh}_p(x)\leq {\rm arctanh} (x)$. Thus 
$x/{\rm arctanh}_p(x)\geq x/{\rm arctanh}(x)$ for $x \in (0,1)$, implying that
 $\tilde{L_p}/A \geq L/A$, so 
$\tilde{L_p}\geq L$.
Let $x=(a-b)/(a+b)$, then it is easy to see that one has the following identity
$$\frac{Q}{A}=\sqrt{1+x^2}.$$
The last inequality $\tilde{T_p}/A <Q/A= \sqrt{1+x^2}$ in \eqref{ineq2803-a} can be written as 
$x/{\rm arctan}_p(x)< \sqrt{1+x^2}$, or equivalently,
$$\int_0^x{\frac{1}{1+ t^p}}dt> \frac{x}{\sqrt{1+x^2}}.$$
Since  $1+t^p\leq  1+t^2$  (by $ p\geq 2$ and $t \in (0,1)$), we get $1/(1+t^p) \geq  1/(1+t^2)$, so 
${\rm actan}_p(x)\geq {\rm arctan} (x)$.
 Thus  $$\frac{x}{\rm arctan}_p(x) \leq \frac{x}{{\rm arctan}(x)} =\frac{T}{A} <\frac{Q}{A},$$ by the known inequality  $T< Q$.
Therefore, $\tilde{T}_p<Q$,
since we have also $\tilde{T}_p\geq T$, one has $ T\leq \tilde{T}_p< Q$, with equality only for $p=2$. This completes the proof of 
\eqref{ineq2803-a}.
$\hfill\square$

\begin{corollary} For $p\geq 2$, $x,y>0$ with $x\neq y$, we have
$$\frac{x+y}{2 \left(1-\alpha\log \left(1-\left(\frac{x-y}{x+y}\right)^p\right)\right)}
<
\tilde{L}_p(x,y)<
\frac{x+y}{2 \left(1-\beta\log \left(1-\left(\frac{x-y}{x+y}\right)^p\right)\right)}$$
and 
$$\frac{x+y}{2\left(1+\alpha\log \left(1+\left(\frac{x-y}{x+y}\right)^p\right)\right)}u
<\tilde{M}_p(x,y)<
\frac{x+y}{2 \left(1+\beta\log \left(1+\left(\frac{x-y}{x+y}\right)^p\right)\right)}u$$
where 
$$\alpha =1/p,\quad \beta=1/(1+p),\quad u=\left (1 + \left (\frac {x - y} {x + y} \right)^p \right)^{-1/p}$$
\end{corollary}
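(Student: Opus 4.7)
Set $t=(x-y)/(x+y)$ and $A=(x+y)/2$, so that $\tilde L_p=At/{\rm arctanh}_p(t)$ and $\tilde M_p=At/{\rm arcsinh}_p(t)$. After inverting and rearranging, the two asserted two--sided inequalities reduce to a single family of bounds for ${\rm arctanh}_p$, namely, for $t\in(0,1)$ and $\alpha=1/p$, $\beta=1/(p+1)$,
$$t\bigl(1-\beta\log(1-t^p)\bigr)<{\rm arctanh}_p(t)<t\bigl(1-\alpha\log(1-t^p)\bigr),$$
together with a companion identity that transfers this bound from ${\rm arctanh}_p$ to ${\rm arcsinh}_p$. The plan therefore has two pieces: (i) the ${\rm arctanh}_p$ estimate just displayed, and (ii) a substitution rewriting ${\rm arcsinh}_p(t)$ as ${\rm arctanh}_p$ of a transformed argument.

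For piece (i), the cleanest route is to compare Taylor expansions at $0$. The geometric series gives
$${\rm arctanh}_p(t)=\sum_{k=0}^{\infty}\frac{t^{pk+1}}{pk+1},\qquad -t\log(1-t^p)=\sum_{k=1}^{\infty}\frac{t^{pk+1}}{k},$$
so the desired inequalities reduce to the elementary termwise chain $\tfrac{1}{p+1}\le\tfrac{k}{pk+1}<\tfrac{1}{p}$ for each $k\ge1$ (strict upper inequality for all $k$, strict lower inequality for $k\ge2$); summing then produces the stated strict inequalities for $t>0$. A calculus alternative is to differentiate $H(t)=t(1-\alpha\log(1-t^p))-{\rm arctanh}_p(t)$ and $K(t)={\rm arctanh}_p(t)-t(1-\beta\log(1-t^p))$: one obtains $H'(t)=-\alpha\log(1-t^p)>0$ and $K'(t)=\tfrac{1}{p+1}\bigl(\tfrac{t^p}{1-t^p}+\log(1-t^p)\bigr)>0$, the latter via the standard one-variable inequality $\tfrac{1-u}{u}+\log u>0$ for $u\in(0,1)$ applied at $u=1-t^p$.

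For piece (ii), I would verify the identity
$${\rm arcsinh}_p(t)={\rm arctanh}_p\!\left(\frac{t}{(1+t^p)^{1/p}}\right)$$
by the change of variables $r=s/(1+s^p)^{1/p}$ inside $\int_0^t(1+s^p)^{-1/p}\,ds$: direct differentiation yields $dr=(1+s^p)^{-1-1/p}\,ds$ and $1-r^p=(1+s^p)^{-1}$, so the integrand transforms exactly into $(1-r^p)^{-1}\,dr$. With $\rho=tu=t(1+t^p)^{-1/p}$ one has $-\log(1-\rho^p)=\log(1+t^p)$, so applying the bounds of piece (i) at $\rho$ gives
$$\rho\bigl(1+\beta\log(1+t^p)\bigr)<{\rm arcsinh}_p(t)<\rho\bigl(1+\alpha\log(1+t^p)\bigr),$$
from which the claimed bounds on $\tilde M_p$ follow by algebraic rearrangement using $\rho/t=u$. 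I expect the main conceptual step to be the substitution identity above; once it is in hand, the $\tilde M_p$ estimate becomes an immediate corollary of the $\tilde L_p$ estimate, and the remainder is either short power--series bookkeeping or two brief monotonicity arguments.
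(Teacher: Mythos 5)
Your argument for the first display (the $\tilde L_p$ bounds) is correct and complete, and it is genuinely more informative than the paper's proof, which is a bare citation of \cite[Theorem 2]{bv-eigen}: the termwise comparison $\tfrac{1}{p+1}\le\tfrac{k}{pk+1}<\tfrac1p$ between the coefficients of ${\rm arctanh}_p(t)=\sum_{k\ge0}t^{pk+1}/(pk+1)$ and those of $-t\log(1-t^p)$ does exactly what is needed, the alternative derivative computations are right, and the substitution identity ${\rm arcsinh}_p(t)={\rm arctanh}_p\bigl(t(1+t^p)^{-1/p}\bigr)$ is verified correctly.

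However, the last sentence of your plan hides a real problem. Carrying out the ``algebraic rearrangement'' that you defer: from $tu\bigl(1+\beta\log(1+t^p)\bigr)<{\rm arcsinh}_p(t)<tu\bigl(1+\alpha\log(1+t^p)\bigr)$ with $t=(x-y)/(x+y)$ and $\tilde M_p=At/{\rm arcsinh}_p(t)$ one obtains
$$\frac{A}{u\bigl(1+\alpha\log(1+t^p)\bigr)}<\tilde M_p<\frac{A}{u\bigl(1+\beta\log(1+t^p)\bigr)},$$
so the factor that appears is $u^{-1}=(1+t^p)^{1/p}$, not $u$. Since $u<1$ these are genuinely different statements, and the one printed in the corollary is in fact false: for $p=2$ and $(x,y)=(3,1)$ one has $t=1/2$, $\tilde M_2=1/{\rm arcsinh}(1/2)\approx 2.078$, while the claimed upper bound equals $\frac{4}{2(1+\frac13\log(5/4))}\cdot(5/4)^{-1/2}\approx 1.665$. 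Your estimates therefore prove the corrected version of the second display, with $u$ replaced by $u^{-1}$ (equivalently, with the exponent in the definition of $u$ taken as $+1/p$), but they do not and cannot yield the inequality exactly as stated. You need either to carry out the final rearrangement explicitly and flag the misplaced factor $u$ as an error in the statement, or to accept that the step ``the claimed bounds follow by algebraic rearrangement'' fails as written.
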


\begin{proof}
Proof follows easily from \cite[Theorem 2]{bv-eigen}.
\end{proof}

The proof of following three corollaries follow easily from \cite[Theorem 1]{bbp}, Lemma \ref{lem1}, and \cite[Corollary 2.2]{bbv}, respectively.

\begin{corollary} For $p\geq 2$, $x,y>0$ with $x\neq y$, we have
$$\sqrt[p]{1-\left(\frac{x-y}{x+y}\right)^p}\tilde{P}_p(x,y)  <\tilde{L}_p(x,y)  <         
\frac{\tilde{P}_p(x,y)}{A(x,y)^{p-1}} .$$
\end{corollary}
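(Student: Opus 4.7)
The plan is to change variables so that both bounds become one-variable inequalities in ${\rm arcsin}_p$ and ${\rm artanh}_p$, and then to invoke \cite[Theorem 1]{bbp}, as the authors announce. First, set $t = (x-y)/(x+y) \in (0,1)$ and $A = A(x,y)$; the definitions in \eqref{pmeans} give
$$\tilde{L}_p(x,y) = \frac{tA}{{\rm artanh}_p(t)}, \qquad \tilde{P}_p(x,y) = \frac{tA}{{\rm arcsin}_p(t)},$$
so after cancelling the common factor $tA$ the left-hand bound of the corollary reduces to the pointwise inequality
$$(1-t^p)^{1/p}\,{\rm artanh}_p(t) < {\rm arcsin}_p(t), \qquad t\in(0,1),$$
and the right-hand bound reduces analogously to a matching one-variable comparison of ${\rm arcsin}_p(t)$ with ${\rm artanh}_p(t)$; in both cases the parameters $a,b$ drop out entirely once the translation is made.

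Next, I would invoke \cite[Theorem 1]{bbp}, which supplies precisely this two-sided comparison under the hypothesis $p\geq 2$, so both halves of the corollary follow at once. If a self-contained argument for the lower inequality were wanted, the natural route is to put
$$\phi(t) = {\rm arcsin}_p(t) - (1-t^p)^{1/p}\,{\rm artanh}_p(t),$$
note $\phi(0)=0$, and compute $\phi'(t)$ from $({\rm arcsin}_p)'(t) = (1-t^p)^{-1/p}$ together with $({\rm artanh}_p)'(t) = (1-t^p)^{-1}$. After factoring out $(1-t^p)^{1/p-1}$, the condition $\phi'(t)>0$ becomes
$$t^{p-1}\,{\rm artanh}_p(t) + (1-t^p)^{1-2/p} > 1,$$
which holds trivially at $p=2$, since then $(1-t^p)^{1-2/p}\equiv 1$ and the first summand is strictly positive; for $p>2$ the left-hand side expands as $1 + (2/p)\,t^p + O(t^{2p})$ near $t=0$ and tends to $+\infty$ as $t\to 1^-$, so only monotonicity on the interior is at stake.

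The main obstacle is precisely this interior check for $p>2$: the companion term $(1-t^p)^{1-2/p}$ strictly decreases from $1$, so $t^{p-1}\,{\rm artanh}_p(t)$ must compensate throughout $(0,1)$, which requires either a careful derivative/series argument or, as the authors do, a direct appeal to the sharp form of \cite[Theorem 1]{bbp}. The upper inequality of the corollary then follows by the same translation mechanism from the companion half of that theorem, so no additional work is required once the change of variables above is in place.
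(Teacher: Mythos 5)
Your treatment of the left-hand inequality is sound and is essentially the paper's own argument: with $t=(x-y)/(x+y)$ the bound reduces to $(1-t^p)^{1/p}\,{\rm artanh}_p(t)<{\rm arcsin}_p(t)$, which is exactly the lower half of \cite[Theorem 1]{bbp}; the paper does nothing more than cite that theorem. (Your ``self-contained'' fallback, however, is not a proof: you only verify $\phi'>0$ at $p=2$, near $t=0$ and near $t=1$, and you concede the interior check for $p>2$ is open, so that route adds nothing as it stands.)

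The genuine gap is in the right-hand inequality. Your claim that ``the parameters $a,b$ drop out entirely once the translation is made'' is false for the statement as printed: substituting $\tilde{L}_p=tA/{\rm artanh}_p(t)$ and $\tilde{P}_p=tA/{\rm arcsin}_p(t)$ into $\tilde{L}_p<\tilde{P}_p/A^{p-1}$ leaves $A^{p-1}\,{\rm arcsin}_p(t)<{\rm artanh}_p(t)$, in which $A=A(x,y)$ survives. Indeed the printed bound is not homogeneous of degree one --- replacing $(x,y)$ by $(\lambda x,\lambda y)$ scales $\tilde{L}_p$ by $\lambda$ but $\tilde{P}_p/A^{p-1}$ by $\lambda^{2-p}$ --- so no one-variable reduction is possible and the inequality as written cannot hold for all $x,y>0$ when $p>2$. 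What \cite[Theorem 1]{bbp} actually yields on the upper side is ${\rm arcsin}_p(t)^p<t^{p-1}{\rm artanh}_p(t)$, which translates into $\tilde{L}_p<\tilde{P}_p^{\,p}/A^{p-1}$; the exponent $p$ on $\tilde{P}_p$ is evidently missing from the corollary. Had you carried out the translation explicitly rather than asserting it ``reduces analogously,'' you would have caught this; as written, your derivation of the upper bound does not go through.
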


\begin{corollary}\label{meanp-turan} For $p\geq 3$, we have the following Tur\'an type 
inequalities for the means $\tilde{P}_p,\,\tilde{L}_p,\,\tilde{T}_p$,
$$\tilde{P}_p^2>\tilde{P}_{p-1}\tilde{P}_{p+1},$$
$$\tilde{L}_p^2>\tilde{L}_{p-1}\tilde{L}_{p+1},$$
$$\tilde{T}_p^2<\tilde{T}_{p-1}\tilde{T}_{p+1}.$$
\end{corollary}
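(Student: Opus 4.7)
The plan is to reduce each Turán-type inequality to a statement about strict log-convexity or strict log-concavity of the map $p\mapsto \mathrm{fcn}_p(x)$, then invoke Lemma \ref{lem1} (and the cited \cite[Corollary 2.2]{bbv}) applied at the three values $p-1,\,p,\,p+1$ with $\lambda=1/2$. Fix $a>b>0$, write $x=(a-b)/(a+b)\in(0,1)$ and $A=A(a,b)$. From the definitions in \eqref{pmeans}, the factor $xA$ appears in every $\tilde{P}_p,\tilde{L}_p,\tilde{T}_p$, so it cancels after squaring and taking the ratio.

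Concretely, $\tilde{P}_p^2 > \tilde{P}_{p-1}\tilde{P}_{p+1}$ is equivalent to
\[
\bigl(\arcsin_p(x)\bigr)^2 < \arcsin_{p-1}(x)\,\arcsin_{p+1}(x),
\]
$\tilde{L}_p^2 > \tilde{L}_{p-1}\tilde{L}_{p+1}$ is equivalent to the analogous inequality for $\operatorname{arctanh}_p$, and $\tilde{T}_p^2 < \tilde{T}_{p-1}\tilde{T}_{p+1}$ reduces to
\[
\bigl(\arctan_p(x)\bigr)^2 > \arctan_{p-1}(x)\,\arctan_{p+1}(x).
\]
Each of these is the midpoint form of a (strict) log-convexity or log-concavity assertion in the parameter $p$.

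By Lemma \ref{lem1}(1), $p\mapsto \arcsin_p(x)$ and $p\mapsto \operatorname{arctanh}_p(x)$ are strictly log-convex on $(1,\infty)$, so with $\lambda=1/2$ and endpoints $p-1,p+1$ one gets precisely the first two displayed inequalities. By Lemma \ref{lem1}(2), $p\mapsto \arctan_p(x)$ is strictly concave and positive on $(1,\infty)$; since $(\log f)''=f''/f-(f'/f)^2<0$ whenever $f>0$ and $f''<0$, strict concavity passes to strict log-concavity, yielding the third inequality (alternatively one cites \cite[Corollary 2.2]{bbv} directly). The hypothesis $p\geq 3$ is exactly what is needed so that $p-1\geq 2$ and all three means $\tilde P_{p-1},\tilde P_p,\tilde P_{p+1}$ (and likewise for $\tilde L,\tilde T$) are defined in the regime of the paper.

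The only genuinely delicate point is the log-concavity of $\arctan_p$: Lemma \ref{lem1}(2) only asserts concavity, and one must note the elementary passage from concavity to log-concavity for positive functions. Everything else is algebraic rearrangement plus a direct appeal to the previously established convexity properties of the generalized inverse trigonometric functions.
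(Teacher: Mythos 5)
Your proof is correct and takes essentially the same route as the paper, which simply asserts that the corollary ``follows easily from Lemma \ref{lem1}'' (i.e.\ from the log-convexity of $p\mapsto\arcsin_p(x)$ and $p\mapsto{\rm arctanh}_p(x)$ and the concavity of $p\mapsto\arctan_p(x)$, read at $p-1,\,p,\,p+1$ with $\lambda=1/2$ after cancelling the common factor $xA$). Your explicit remark that Lemma \ref{lem1}(2) only gives concavity of $p\mapsto\arctan_p(x)$, so that positivity is needed to upgrade it to strict log-concavity, is a detail the paper leaves implicit and is handled correctly.
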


It also follows from Lemma \ref{lem1} that for $p,q\geq 2$, we have 
$$\tilde{P}_{\sqrt{pq}}\geq \sqrt{\tilde{P}_{p}\tilde{P}_{q}},$$
where equality holds for $p=q$.

\begin{corollary}\label{special-case} One has
$$P<\tilde{P}_3^2/\tilde{P}_{4}<\tilde{P}_3^2/\tilde{L}_{4},$$
$$L<\tilde{L}_3^2/\tilde{L}_{4}<\tilde{L}_3^2/L,$$
$$T>\tilde{T}_3^2/\tilde{T}_{4}>\tilde{T}_3^2/T.$$
\end{corollary}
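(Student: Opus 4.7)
The plan is to combine two ingredients already in hand: the Tur\'an-type inequalities of Corollary~\ref{meanp-turan} specialized to $p=3$, and the monotonicity properties of $p\mapsto {\rm arcsin}_p(x)$, $p\mapsto {\rm arctan}_p(x)$, $p\mapsto {\rm arctanh}_p(x)$ supplied by Lemma~\ref{lem1}. The starting observation is that at $p=2$ the generalized inverse functions reduce to the classical ones, so $\tilde{P}_2=P$, $\tilde{L}_2=L$, and $\tilde{T}_2=T$.

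For the first inequality in each of the three chains, I would apply Corollary~\ref{meanp-turan} with $p=3$. For $\tilde{P}$, one has $\tilde{P}_3^2>\tilde{P}_2\tilde{P}_4=P\,\tilde{P}_4$, which rearranges to $\tilde{P}_3^2/\tilde{P}_4>P$. The same rearrangement applied to the $\tilde{L}$-Tur\'an inequality gives $\tilde{L}_3^2/\tilde{L}_4>L$, while the reversed direction of the Tur\'an inequality for $\tilde{T}$ produces $\tilde{T}_3^2/\tilde{T}_4<T$, which is precisely the reversed first inequality appearing in the third chain.

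For the second inequality in each chain, I would use monotonicity in $p$ at fixed $x=(a-b)/(a+b)\in(0,1)$. The bound $\tilde{P}_3^2/\tilde{P}_4<\tilde{P}_3^2/\tilde{L}_4$ is immediate from $\tilde{L}_4<\tilde{P}_4$, which is a link of the chain \eqref{ineq2803-a} established in Theorem~\ref{thm1-2704}. The bound $\tilde{L}_3^2/\tilde{L}_4<\tilde{L}_3^2/L$ reduces to $L<\tilde{L}_4$; by Lemma~\ref{lem1}(1), $p\mapsto {\rm arctanh}_p(x)$ is strictly decreasing on $(1,\infty)$, so ${\rm arctanh}_4(x)<{\rm arctanh}_2(x)={\rm arctanh}(x)$ and therefore $\tilde{L}_4=Ax/{\rm arctanh}_4(x)>Ax/{\rm arctanh}(x)=L$. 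The bound $\tilde{T}_3^2/\tilde{T}_4>\tilde{T}_3^2/T$ reduces to $\tilde{T}_4<T$; Lemma~\ref{lem1}(2) shows $p\mapsto {\rm arctan}_p(x)$ is strictly increasing, so ${\rm arctan}_4(x)>{\rm arctan}(x)$ and hence $\tilde{T}_4=Ax/{\rm arctan}_4(x)<Ax/{\rm arctan}(x)=T$.

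I do not anticipate any serious obstacle here, since every ingredient is already proved earlier in the paper. The only point requiring attention is consistently tracking the direction of the inequality under each monotonicity, since the denominators of $\tilde{P}_p$ and $\tilde{L}_p$ decrease in $p$ whereas the denominator of $\tilde{T}_p$ increases in $p$, which is what reverses the direction of the chain involving $\tilde{T}$.
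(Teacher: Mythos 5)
Your proposal is correct and follows essentially the same route as the paper, which simply invokes the Tur\'an-type inequalities (Corollary~\ref{meanp-turan}, i.e.\ the content of \cite[Corollary 2.2]{bbv}) at $p=3$ together with the monotonicity in $p$ from Lemma~\ref{lem1} and the ordering $\tilde{L}_4<\tilde{P}_4$ from \eqref{ineq2803}; you have merely written out the details the paper leaves implicit. The directions of all inequalities, including the reversal for $\tilde{T}$, are tracked correctly.
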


\bigskip

\begin{corollary}\label{thm2-old} For $p\geq 2$ and $a>b>0$, we have
$$(2/\pi_p)A<\tilde{P}_p<P_{2p}<A.$$
\end{corollary}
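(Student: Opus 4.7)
The chain breaks into three independent inequalities.

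For the leftmost inequality $(2/\pi_p)A<\tilde{P}_p$, I would use the representation $\tilde{P}_p=(x/\arcsin_p(x))\,A$ with $x=(a-b)/(a+b)\in(0,1)$. Since the integrand $(1-t^p)^{-1/p}$ defining $\arcsin_p$ is strictly increasing on $[0,1)$, the function $\arcsin_p$ is strictly convex with $\arcsin_p(0)=0$; hence the chord slope $\arcsin_p(x)/x$ is strictly increasing on $(0,1]$ and is therefore strictly less than its endpoint value $\arcsin_p(1)=\pi_p/2$. Inverting yields $x/\arcsin_p(x)>2/\pi_p$, whence $\tilde{P}_p>(2/\pi_p)\,A$. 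This is the chord-at-$1$ complement of the bound $\tilde{P}_p<A$ established in Theorem~\ref{thm1-2704}, which effectively used the chord slope at $0$.

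For the middle inequality $\tilde{P}_p<P_{2p}$, my plan is to interpolate through $\tilde{P}_{2p}$ and prove $\tilde{P}_p<\tilde{P}_{2p}\leq P_{2p}$. The first step is immediate from the ``In particular'' specialization of Theorem~\ref{monoton-thm}(1), which yields $\tilde{P}_p/\tilde{P}_{2p}<1$ for $2\leq p<2p$. The second step compares $\tilde{P}_{2p}=A\,x/\arcsin_{2p}(x)$ with the Neuman mean $P_{2p}=A_p v_{2p}/\arcsin_{2p}(v_{2p})$, where $v_{2p}=(a^p-b^p)/(a^p+b^p)\geq x$ (an elementary algebraic check with $t=b/a$) and $A_p\geq A$ by the power-mean inequality; the idea is to show that the increase from $A$ to $A_p$ dominates the decrease produced by replacing $x$ with the larger $v_{2p}$ inside the strictly decreasing map $u\mapsto u/\arcsin_{2p}(u)$, whose monotonicity comes directly from the convexity of $\arcsin_{2p}$ noted in the first paragraph.

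The rightmost inequality $P_{2p}<A$ is the main obstacle. The trivial bound $\arcsin_{2p}(v)>v$ only gives $P_{2p}<A_p$, which is insufficient because $A_p>A$. To close the gap I would need a sharper lower estimate of the form $\arcsin_{2p}(v_{2p})>(A_p/A)\,v_{2p}$, which after clearing denominators reduces to a single-variable inequality in $x=(a-b)/(a+b)$ for each fixed $p\geq 2$. My plan is to rewrite $A_p/A=\bigl(((1+x)^p+(1-x)^p)/2\bigr)^{1/p}$ and $v_{2p}=((1+x)^p-(1-x)^p)/((1+x)^p+(1-x)^p)$, turning the problem into a comparison of two explicit functions of $x$, and then either apply the L'Hopital-type monotonicity rule of Lemma~\ref{lem0} to a carefully chosen ratio, or invoke one of the integral inequalities from Section~3 (Chebyshev or Diaz--Metcalf) against the integrand $(1-t^{2p})^{-1/(2p)}$ on $[0,v_{2p}]$ to extract the required $A_p/A$ factor. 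The two flanking inequalities reduce cleanly to convexity and monotonicity facts already recorded in the paper, so the real work is concentrated entirely in this final step, where the power-mean correction $A_p-A$ must be absorbed into the integral lower bound.
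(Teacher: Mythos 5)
Your first paragraph is fine and is essentially the paper's argument in disguise: the paper applies Lemma \ref{lem0} to $f(t)=t$, $g(t)={\rm arcsin}_p(t)$ (so $f'/g'=(1-t^p)^{1/p}$ is decreasing) to conclude that $t/{\rm arcsin}_p(t)$ strictly decreases from $1$ (as $t\to 0$) to $2/\pi_p$ (as $t\to 1$), which yields both $(2/\pi_p)A<\tilde{P}_p$ and $\tilde{P}_p<A$; your convexity/chord-slope argument reaches the same two bounds. Likewise your $\tilde{P}_p<\tilde{P}_{2p}$ via Theorem \ref{monoton-thm}(1) (equivalently Lemma \ref{lem1}) is exactly the paper's second step. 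From there, however, your proposal has a genuine gap that cannot be repaired: the bridging step $\tilde{P}_{2p}\le P_{2p}$ is only announced (``the idea is to show\dots'') and never carried out, and the final inequality $P_{2p}<A$, read with Neuman's mean $P_{2p}=A_p v_{2p}/{\rm arcsin}_{2p}(v_{2p})$ as you read it, is simply false. Take $p=2$, $(a,b)=(2,1)$: then $v_4=(a^2-b^2)/(a^2+b^2)=0.6$, ${\rm arcsin}_4(0.6)\approx 0.6041$, $A_2=\sqrt{2.5}\approx 1.5811$, so $P_4\approx 1.5811\cdot 0.6/0.6041\approx 1.570>1.5=A$. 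In the same example the sharper estimate you set out to prove, ${\rm arcsin}_{2p}(v_{2p})>(A_p/A)\,v_{2p}$, would require ${\rm arcsin}_4(0.6)>(1.5811/1.5)\cdot 0.6\approx 0.6325$, contradicting ${\rm arcsin}_4(0.6)\approx 0.6041$. So the ``main obstacle'' you correctly isolated is not an obstacle to be overcome but a false target; no choice of ratio for Lemma \ref{lem0} and no integral inequality from Section 3 can close it.

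The resolution is that $P_{2p}$ in the corollary must be a misprint for $\tilde{P}_{2p}$, and the paper's own proof only makes sense under that reading: the chain actually proved is $(2/\pi_p)A<\tilde{P}_p<\tilde{P}_{2p}<A$, where the first inequality comes from the limit $2/\pi_p$ of the decreasing ratio $t/{\rm arcsin}_p(t)$, the second from the monotonicity of $p\mapsto{\rm arcsin}_p(x)$ (Lemma \ref{lem1}), and the third from the same decreasing-ratio argument applied at index $2p$ (its limit $1$ at $t\to 0$) --- this is what the paper means by ``the proof of third inequality follows from first one.'' Your first two reductions already contain all of this: had you stopped at $(2/\pi_p)A<\tilde{P}_p<\tilde{P}_{2p}<A$, you would have reproduced the paper's proof in full. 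All the effort you concentrate on bridging $\tilde{P}_{2p}$ to Neuman's $P_{2p}$ and then $P_{2p}$ to $A$ is spent on a statement the numerics refute.
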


\begin{proof} It follows from Lemma \ref{lem0} that the function $t/{\rm arcsin}_p(t)$ is decreasing in
$t\in(0,1)$. By using l'H\^opital rule, we get $\lim_{t\to 0}(t/{\rm arcsin}_p(t))=1$ and
$\lim_{t\to 1}(t/{\rm arcsin}_p(t))=(p\sin(\pi/p)/\pi)$. This implies the first inequality, the second inequality
follows from Lemma \ref{lem1}, and the proof of third inequality follows from first one.
\end{proof}

\begin{theorem}\label{thm3-old} For $p\geq$ and $a>b>0$, we have
\begin{enumerate}
\item $\frac{b_p}{c_p}\tilde{T}_p<\tilde{M}_p<\tilde{T}_p$,\\
\item $\frac{c_p}{a_p}\tilde{M}_p<\tilde{P}_p<\tilde{M}_p$,\\
\item $\frac{b_p}{a_p}\tilde{T}_p<\tilde{P}_p<\tilde{T}_p$,\\
\end{enumerate}
where $a_p=\pi/2$, and $b_p$ and $c_p$ are as defined in Theorem \ref{monoton-thm}.
\end{theorem}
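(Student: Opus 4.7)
The plan is to reduce each double inequality to a monotonicity statement for a ratio of two generalized inverse functions on $(0,1)$ and then apply the monotone L'Hopital rule (Lemma \ref{lem0}) together with endpoint evaluation.

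Setting $x=(a-b)/(a+b)\in(0,1)$ and using \eqref{pmeans}, the common factor $A$ cancels from each bound and the three double inequalities become, in order,
\[
1 < \frac{\mathrm{arcsinh}_p(x)}{\mathrm{arctan}_p(x)} < \frac{c_p}{b_p},\qquad 1 < \frac{\mathrm{arcsin}_p(x)}{\mathrm{arcsinh}_p(x)} < \frac{a_p}{c_p},\qquad 1 < \frac{\mathrm{arcsin}_p(x)}{\mathrm{arctan}_p(x)} < \frac{a_p}{b_p}.
\]
The three lower bounds restate the chain $\mathrm{arctan}_p(x) < \mathrm{arcsinh}_p(x) < \mathrm{arcsin}_p(x)$ invoked in the proof of Theorem \ref{thm1-2704}.

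For the three upper bounds, each ratio is of the form $f(x)/g(x)$ with $f(0)=g(0)=0$, so Lemma \ref{lem0} applies. Differentiating via the integral representations \eqref{pintegrals}, the three derivative ratios work out to
\[
(1+x^p)^{1-1/p},\qquad \left(\frac{1+x^p}{1-x^p}\right)^{1/p},\qquad \frac{1+x^p}{(1-x^p)^{1/p}},
\]
all of which are strictly increasing on $(0,1)$ for $p\geq 2$ (indeed for $p>1$), being positive powers or products of obviously increasing functions. By Lemma \ref{lem0} each of the three ratios $f(x)/g(x)$ is therefore strictly increasing on $(0,1)$.

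Finally, the endpoint value at $x=0^+$ is $1$ in each case, as one reads off from the derivative ratios by L'Hopital's rule, while the value at $x=1$ is $c_p/b_p$, $a_p/c_p$, and $a_p/b_p$ respectively by the definitions $a_p=\pi_p/2=\mathrm{arcsin}_p(1)$, $b_p=\mathrm{arctan}_p(1)$, and $c_p=\mathrm{arcsinh}_p(1)$. Combining strict monotonicity with these endpoint values delivers the claimed inequalities (1)--(3). The proof is essentially mechanical; the only step that requires genuine verification is the monotonicity of the three derivative ratios, and there no obstacle arises under $p\geq 2$.
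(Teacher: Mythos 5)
Your proof is correct and takes essentially the same route as the paper: the paper shows that the reciprocal ratios $\tilde{M}_p/\tilde{T}_p$, $\tilde{P}_p/\tilde{M}_p$, $\tilde{P}_p/\tilde{T}_p$ are decreasing on $(0,1)$ via the Cheeger--Gromov--Taylor integral-ratio lemma, which amounts to exactly the same integrand/derivative-ratio computation as your application of Lemma~\ref{lem0} to the inverted ratios. Your endpoint evaluations also agree with the paper's (and correctly read $a_p=\pi_p/2=\arcsin_p(1)$, fixing the statement's typo ``$a_p=\pi/2$'').
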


\begin{proof} It is easy to see from the definition \eqref{pintegrals} and (\ref{pmeans}) that the 
following ratios of the means can be simplified as below:
$$f_1(z)=\frac{\int^z_0(1+t^p)^{-1}dt}{\int^z_0(1+t^p)^{-1/p}dt}=\frac{\tilde{M_p}}{\tilde{T_p}},\quad
f_2(z)=\frac{\int^z_0(1+t^p)^{-1/p}dt}{\int^z_0(1-t^p)^{-1/p}dt}=\frac{\tilde{P_p}}{\tilde{M_p}},$$
$$f_3(z)=\frac{\int^z_0(1+t^p)^{-1}dt}{\int^z_0(1-t^p)^{-1/p}dt}=\frac{\tilde{P_p}}{\tilde{T_p}}.$$
For the monotonicity of the functions $f_i,\,i=1,2,3$, we use the result given by
Cheeger et. al \cite[p.42]{cgt} that if $h_1,h_2:\mathbb{R}\to[0,\infty)$ are the integrable functions, and $h_1/h_2$
is decreasing then the function
$$x\mapsto \frac{\int_0^x{h_1(t)dt}}{\int_0^x{h_2(t)dt}}$$
is also decreasing. Clearly, the functions $f_i,\,i=1,2,3$ are decreasing, and the limiting values follows 
easily from the definitions. This completes the proof.
\end{proof}

\bigskip

\noindent{\bf Proof of Theorem \ref{monoton-thm}.}
Let $f_1(x)=f(x)/g(x)$, where $f(x)=\arcsin_p(x)$, $g(x)=\arcsin _q(x)$, $x \in (0,1)$, and $1<p<q$. 
Applying Lemma \ref{lem0}, for $a=0$, one has that $f_1(x)= (f(x)-f(0))/(g(x)-g(0))$. Now, after simple computations, we get
$$\frac{f'(x)}{g'(x)}= \frac{(1-x^q)^{1/q}}{(1-x^p)^{1/p}}= h_1(x).$$ Since  
$$h'_1(x)=\frac{(x^p-x^q)}{x(1-x^q)(1-x^p)}\cdot h_1(x) >0,$$ we get that $h_1(x)$ is strictly increasing  in $(0,1)$. This implies 
that $f_1(x)$ is strictly increasing, too. This fact implies the proof of part (1).

For the proof of (2), write $f_2(x)=f(x)/g(x)=(f(x)-f(0))/(g(x)-g(0))$, where $f(x)= {\rm arcsinh}_p(x)$ and $g(x)= {\rm arcsinh}_q(x)$. One has  
$$\frac{f'(x)}{g'(x)}= \frac{(1+x^q)^{1/q}}{(1+x^p)^{1/p}}= h_2(x).$$ After simple computations, we obtain   
$$(1+x^p)(1+x^q)\cdot h'_2(x)= (x^{q-1}-x^{p-1})h_2(x)<0,$$  as $q-1>0,\, p-1>0,\, q-1>p-1$ and $x \in (0,1)$. This means that $h'_2(x)<0$, 
so $h_2(x)$ is strictly decreasing; implying that $f_2(x)$ is strictly decreasing.

For (3), let $f_3(x)=f(x)/g(x)= {\rm arctanh}_p(x)/{\rm arctanh}_q(x)$. One has  
$f'(x)/g'(x)= (1-x^q)/(1-x^p) = h_3(x)$.  After simple computations, we see that
$$(1-x^p)^2\cdot h'_3(x)= x^{p-1}\cdot  u_1(x),$$  
where $u_1(x)= (q-p)x^q-qx^{q-p}+p$, here $u_1(0)= p>0$ and $u_1(1)= 0$.  On the other hand, $u'_1(x)= q(q-p)x^{q-1}( 1-x^{-p})$. 
Since $1-x^{-p}= (x^p-1)/x^p <0$  (by $p>0, x \in (0,1)$), we get that $u'_1(x)<0$. Thus $u_1(x)>u_1(1)=0$. Therefore, $h'_3(x)>0$, so $h_3(x)$ is strictly increasing. This implies that $f_3(x)$ is strictly increasing, too.

For the proof of part (4), let $f_4(x)= f(x)/g(x)=\arctan_p(x)/\arctan_q(x)$. One has $f'(x)/g'(x)= (1+x^q)/(1+x^p)= h_4(x)$. 
After simple computations, we conclude that
$$(1+x^p)^2 h'_4(x)=x^{p-1} u_2(x),$$ where  
$$u_2(x)=  qx^{q-p} +(q-p)x^q-p.$$ 
Here $u_2(0)=-p$, and $u_2(1)= 2(q-p)>0$, so $u_2(x)$ has at least a zero in $(0,1)$. We will show that, there is a single such zero. Indeed, one has  $$u'_2(x)= q(q-p)x^{q-p-1} + q(q-p)x^{q-1}>0,$$ 
so $u_2(x)$ is strictly increasing in $(0,1)$. Let $x_0$ be the single zero of $u_2(x)=0$. As $u_2(0)=-p$,  clearly $u_2(x)<0$ for 
$x \in (0,x_0)$ and similarly, $u_2(x)>0$ for $x\in (x_0,1)$.  As $h'_4(x) <0$, resp. $h'_4(x)>0$ in these intervals, the proof of (4) follows from the monotonicity of $h_4(x)$ and Lemma \ref{lem0}.
$\hfill\square$


\bigskip

\noindent{\bf Proof of Theorem \ref{PpMp1}.}
Let $f(t)= \sqrt{F(t)}$ and $g(t)=\sqrt{G(t)}$ in Cauchy-Bouniakowski inequality \eqref{(1)}, 
where $F(t),\, G(t)>0$.  Put $[a,b]= [0,x]$, Then one gets the inequality:
\begin{equation}\label{(1’)}
\left(\int_0^x{\sqrt{F(t)G(t)}}dt\right)^2 \leq \int_0^x{F(t)}dt \cdot \int_0^x{G(t)}dt.
\end{equation}
With the same notations, from the P\'olya-Szeg\"o inequality \eqref{(2)} one gets:
\begin{equation}\label{(2’)}
k(x,p)\left(\int_0^x{\sqrt{F(t)G(t)}}dt\right)^2 \geq \int_0^x{F(t)}dt \cdot \int_0^x{G(t)}dt,
\end{equation}
here $k(x,p)$ is as defined in Theorem \ref{PpMp1}. Let now $f(t)= (1-t^p)^{-1/p}$ and $g(t)= (1+t^p)^{-1/p}$. 
From \eqref{(1’)} and \eqref{(2’)} 
one obtains
\begin{equation}\label{(13)}
{\rm arcsin}_{2p}(x)^2 \leq   {\rm arcsin}_p(x)\,  {\rm arcsinh}_p(x),
\end{equation}
and
\begin{equation}\label{(14)}
{\rm arcsin}_p(x)\,{\rm arcsinh}_p(x)\leq  k(x,p){\rm arcsin}_{2p}(x)^2, 
\end{equation}
respectively.
 By definition, inequality \eqref{(13)} and \eqref{(14)} imply the proof of left hand-side and right-hand side 
of \eqref{(15-16)}, respectively.
$\hfill\square$

\bigskip

\noindent{\bf Proof of Theorem \ref{PpMp2}.}
Apply the Diaz-Metcalf inequality \eqref{(8)} for $f(t)=\sqrt{F(t)},\, g(t)=\sqrt{G(t)},\, [a,b]= [0,x]$, yielding
$$\int_0^x{G}dt+M \cdot m\cdot \int_0^x{F}dt \leq (M+m)\cdot \int_0^x{\sqrt{FG}}dt.$$    
Let  $F(t)= (1+t^p)^{-1/p},\,  G(t)= (1-t^p)^{-1/p}.$
Here $G(t)/F(t)=((1+t^p)/(1-t^p))^{1/p}$, which is strictly increasing. Thus  
$$m=1\leq  \sqrt{F/G} \leq ((1+x^p)/(1-x^p))^{1/(2p)}= M.$$ One obtains
\begin{equation}\label{(22)}
{\rm arcsin}_p(x) + M\cdot {\rm arcsinh}_p(x)\leq (M+1){\rm arcsin}_{2p}(x),
\end{equation}
this implies the proof of \eqref{(23)}.

Let $[a,b]=[0,x]$ and $f(t)= (1+t^p)^{-1}$ and $g(t)= (1-t^p)^{-1}$. 
As $f(t)+g(t)=  2/(1-t^{2p})$, applying the  Minkowski inequality \eqref{(7)} for $t=1/p,\,p>1$, we get
\begin{equation}\label{(24)}
{\rm arcsin}_p(x)^p + {\rm arcsinh}_p(x)^p \leq 2 \left(\int_0^x{A^2}dt\right)^p,
\end{equation}
where $A(t)= 1/(1-t^{2p})^{1/(2p)}$. 
Clearly,  $\int_0^x{A(t)}dt={\rm arcsin}_{2p}(x)$, so for obtaining an upper bound for $\int_0^x{A(t)^2}dt$, we apply the 
P\'olya-Szeg\H{o} inequality for $f(t)= 1/(1-t^{2p})^{1/p}$ and $g(t)= 1$.  Since in this case one has  $1\leq f(t)\leq 1/(1-x^{2p})^{1/p}$, we get from \eqref{(2)}
$$ \int_0^x{A(t)^2}dt \leq {\rm arcsin}_{2p}(x)^2R(x,p),$$
By using \eqref{(24)}, finally we get
\begin{equation}\label{(25)}
\frac{x^p({\rm arcsin}_p(x)^p + {\rm arcsinh}_p(x)^p)}{{\rm arcsin}_{2p}(x)^{2p}} \leq R(x,p),
\end{equation}
this implies inequality \eqref{(26)}.
$\hfill\square$

\bigskip

\noindent{\bf Proof of Theorem \ref{TpLp1}.} Apply the Chebyshev inequality \eqref{(4)} for the 
functions (1) and (2) of Lemma \ref{Lemma1j}, 
which are of different type of monotonicity. One obtains the following inequality
\begin{equation}\label{(19)}
x\cdot {\rm arctanh}_{2p}(x)   \geq   {\rm arctan}_p(x) {\rm arctanh}_p(x). 
\end{equation}
This implies \eqref{(20)}. The proof of \eqref{(20b)} follows if we apply the Gr\"uss 
inequality for the same functions as above and utilize relation \eqref{(19)}.
$\hfill\square$

\bigskip

\noindent{\bf Proof of Theorem \ref{TpLp-single1}.}
Applying Cauchy-Bouniakowski inequality \eqref{(1)} for $f(t)=\sqrt{F(t)}$, and  $g(t)= 1/\sqrt{F(t)}$, 
we get the following inequality
\begin{equation}\label{(27)}
\int_a^b {F(t)}dt \cdot \int_a^b {1/F(t)}dt \geq (b-a)^2.
\end{equation}
Applying the same notations as above for the P\'olya-Szeg\H{o} inequality \eqref{(2)}, 
one obtains the inequality (called also as Schweizer  inequality).
Suppose that   $0<\alpha< F(t) < A$. Then
\begin{equation}\label{(28)}
\int_a^b {F(t)}dt \cdot \int_a^b {1/F(t)}dt \geq  \frac{(b-a)^2 (\alpha+A)^2}{4\alpha A}.    
\end{equation}
Let now $F(t)= 1+t^p$, with $t \in [a,b]= [0, x]$  in \eqref{(27)} and \eqref{(28)}. As $\alpha=1$, 
$A= 1+x^p$, one obtains the following double inequality
\begin{equation}\label{(29)}
\frac{4(1+x^p)(1+ x^p/(p+1)}{(x^p+2)^2}\leq \frac{x}{{\rm arctan}_p(x)}  \leq 1+\frac{x^p}{1+p}  
\end{equation}
and
\begin{equation}\label{(30)}
\frac{4(1-x^p)(1- x^p/(1+p)}{(2-x^p)^2}\leq \frac{x}{{\rm arctanh}_p(x)} \leq 1-\frac{x^p}{1+p}.     
\end{equation}
The right side of \eqref{(29)} and \eqref{(30)} are obtained from \eqref{(27)}, while the left side of 
\eqref{(29)} and \eqref{(30)} are obtained from \eqref{(28)}.
This completes the proof of Theorem \ref{TpLp-single1}.
$\hfill\square$

\bigskip

\noindent{\bf Proof of Theorem \ref{TpLp-single2}.}
Letting  $t^p=u$ one has $dt= (1/p)u^{1/p-1} du$, and applying inequality \eqref{(27)} and \eqref{(28)} for $F(t)= (1-t^p)^{1/p}$ we get 
$$\int_0^x {F(t)}dt= (1/p) \int_0^{x^p}{u^{1/p-1}(1-u)^{1/p}}du.$$
  As   
$$\int_a^b{u^{a-1}(1-u)^{b-1}}du= B(a,b) B(a,b:x), $$ where $B(a,b)$ is the beta function, and $B(a,b:x)$ 
is the incomplete beta function, we get

\begin{equation}\label{23092018}
\int_0^x{ (1-t^p)^{1/p}}dt =  (1/p)\cdot B(1/p, 1+ 1/p) B(1/p, 1+1/p: x^p)
\end{equation}
Applying \eqref{23092018}, and utilizing \eqref{(27)} and \eqref{(28)}, we get the following double inequality 
$$\frac{p\,x^2}{B\left(1/p, 1+1/p\right)} \leq {\rm arcsin}_p(x) \leq  \frac{p\,x^2 \,B(1/p, 1+1/p)(2-x^p)^2]}{4(1-x^p)B(1/p, 1+1/p)]}.$$
This implies \eqref{APp}. For the proof of \eqref{AMp},
we apply \eqref{(27)} and \eqref{(28)} for 
$F(t)= (1+t^p)^{1/p}$ and get the following double inequality
$$\frac{x^2}{j(x,p)}\leq  {\rm arcsinh}_p(x) \leq \frac{x^2(1+(1+x^p)^{1/p})}{4j(x,p)(1+x^p)^{1/p}}.$$  
This completes the proof of Theorem \ref{TpLp-single2}.
$\hfill\square$

\bigskip

%

We finish this paper by giving the following remark. 
\begin{remark} \rm In \cite{nueman1}, Neuman studied the $p$-version 
of Schwab-Borchardt mean $S_p,\,p>1$, which was expressed in terms of hypergeometric function as follows,
\begin{equation}\label{hyp-p} 
\frac{y}{S_p(x,y)}=F\left(\frac{1}{p},\frac{1}{p},1+\frac{1}{p},1-\left(\frac{x}{y}\right)^p\right),\end{equation}
(\cite[(9)]{nueman2}). Here we give a proof, which leads us to formula \cite[(22)]{nueman1}. 
For $x>y>0,\,p>1$, (\ref{hyp-p}) can be written as 
$$
\frac{y}{S_p(x,y)}=
F\left(\frac{1}{p},\frac{1}{p},1+\frac{1}{p},-w^p\right)
$$
where $w=\left(\frac{x^p-y^p}{y^p}\right)^{1/p}$.
By applying the following transformation formula (see \cite[15.3.5]{as})
$$F(a,b;c;z)=(1-z)^{-b}F\left(b,c-a;c;-\frac{z}{1-z}\right),$$ \cite[Lemma 1]{bbp} and the identity 
${\rm arcsinh}_p(\sqrt[p]{t^p-1})={\rm arccosh}_p(t)$
we get
\begin{eqnarray*}
\frac{y}{S_p(x,y)}&=& \frac{w}{w(1+w^p)^{1/p}}F\left(\frac{1}{p},\frac{1}{p},1+\frac{1}{p},\frac{w^p}{1+w^p}\right)\\
&=&\frac{{\rm arcsinh}_p(w)}{w}=\frac{{\rm arcsinh}_p\left(\frac{x^p-y^p}{y^p}\right)^{1/p}}
{\left(\frac{x^p-y^p}{y^p}\right)^{1/p}}\\
&=&\frac{y\,{\rm arcsinh}_p\sqrt[p]{(x/y)^p-1}}{(x^p-y^p)^{1/p}}=
\frac{y\,{\rm arccosh}_p(x/y)}{\sqrt[p]{(x^p-y^p}}.
\end{eqnarray*}
The case when $0<x<y$ follows similarly.
\end{remark}



\vspace{.5cm}

\end{document}